 \theoremstyle{plain}
\newtheorem{thm}{Theorem}[section]
  \theoremstyle{plain}
  \newtheorem*{thm*}{Theorem}
  \theoremstyle{plain}
  \newtheorem{cor}[thm]{Corollary}
  \theoremstyle{remark}
  \newtheorem{rem}[thm]{Remark}
  \theoremstyle{definition}
 \theoremstyle{definition}
 \newtheorem*{defn*}{Definition}
  \theoremstyle{plain}
  \newtheorem{lem}[thm]{Lemma}
 \theoremstyle{definition}
  \newtheorem{example}[thm]{Example}
  \theoremstyle{remark}
\newenvironment{keywords}{ \noindent\footnotesize\textbf{Keywords and phrases:}}{}
\newenvironment{class}{\noindent\footnotesize\textbf{Mathematics subject classification 2010:}}{}
\newcommand*{\Abs}[1]{\left\lVert#1\right\rVert}
\newcommand*{\ceil}[1]{\left\lceil#1\right\rceil}
\newcommand*{\trace}{\operatorname{trace}}
\newcommand{\R}{\mathbb{R}}
\newcommand*{\Grad}{\operatorname{Grad}}
\newcommand*{\abs}[1]{\lvert#1\rvert}
\newcommand{\s}[1]{\mathcal{#1}}
\DeclareMathAccent{\Circ}{\mathalpha}{operators}{"17}
\renewcommand{\Re}{\operatorname{\mathfrak{Re}}}
\theoremstyle{plain}
\newtheorem{Sa*}[section]{Theorem}
\newtheorem{Le*}[section]{Lemma}
\newtheorem{Fo*}[subsection]{Corollary}
\newtheorem{Prop*}[section]{Proposition}
\theoremstyle{definition}
\newtheorem*{Def}{Definition}
\theoremstyle{remark}
 \numberwithin{equation}{section}
\DeclareMathOperator{\spt}{spt}
\DeclareMathOperator{\TextRe}{Re}
\renewcommand{\Re}{\TextRe}
\newcommand{\N}{\mathbb{N}}
\newcommand{\Z}{\mathbb{Z}}
\newcommand{\C}{\mathbb{C}}
\newcommand{\eps}{\varepsilon}
\DeclareMathOperator{\1}{\chi}
\newcommand{\dd}{\ \mathrm{d}}
\DeclareMathOperator{\Diverg}{Div}
\newcommand{\tor}[2]{\stackrel{#1\to #2}{\longrightarrow}}
\newcommand{\ben}{\begin{enumerate}[(i)]}
\newcommand{\een}{\end{enumerate}}
\renewcommand{\tilde}{\widetilde}
\renewcommand*{\epsilon}{\varepsilon}
\renewcommand*{\rho}{\varrho}
\begin{document}
\selectlanguage{english}%
\institut{Institut f\"ur Analysis}

\preprintnumber{MATH-AN-02-2013}

\preprinttitle{Homogenization in fractional elasticity}

\author{Marcus Waurick}

\makepreprinttitlepage

\selectlanguage{american}%
\setcounter{section}{-1}


\title{Homogenization in fractional elasticity}

\author{ Marcus Waurick\\
 Institut f\"ur Analysis, Fachrichtung Mathematik\\
 Technische Universit\"at Dresden\\
 Germany\\
 marcus.waurick@tu-dresden.de }
\maketitle
\begin{abstract} In this note we treat the equations of fractional elasticity. After establishing well-posedness, we show a compactness result related to the theory of homogenization. For this, a previous result in (abstract) homogenization theory of evolutionary equations has to be improved. The approach also permits the consideration of non-local operators (in time and space).
\end{abstract}
\setcounter{section}{-1}

\begin{keywords} Homogenization; Partial Differential Equations; Delay and Memory Effects; $G$-convergence; Fractional Derivatives \end{keywords}

\begin{class} 74Q15 (Effective constitutive equations), 35B27 (Homogenization; equations in media with periodic structure), 35Q74 (PDEs in connection with mechanics of deformable solids) \end{class}
\newpage
\tableofcontents{}

\newpage

\maketitle

\section{Introduction}

The homogenization of (stationary) elasticity has been dealt with intensively in the literature. We refer to \cite{BenLiPap,Duvaut1977,TarIntro} as general references and the quoted literature given there. The viscoelastic models, i.e., elasticity models, where the stress tensor also depends on the derivative of the strain tensor, have been studied in the light of homogenization e.g.~in \cite{Abde2009,Franc1986}. The former also incorporates time-dependent coefficients and the latter includes also thermodynamic behavior. In both the articles \cite{Abde2009,Franc1986} a memory effect is derived. Focusing on the equations of fractional elasticity, we give an operator-theoretic explanation of such effects. By framing the homogenization of fractional elasticity into the homogenization theory of evolutionary equations developed in \cite{Waurick2011Diss} with extensions in \cite{Waurick2012Asy,Waurick2012aDIE}, we show that it is -- in a particular sense -- a generic property of the homogenized equations to have memory effects. For this we have to extend results in the theory of (abstract) homogenization of evolutionary equations in the sense of \cite{Waurick2011Diss} and apply the results to the equations of fractional elasticity.


The equations of linear (fractional) elasticity can be found in \cite[Section 3.1.12.5]{Picard} or \cite[pp.~102]{DuLi}. Denoting the derivative with respect to time by $\partial_0$ and the derivative with respect to the $i$'th spatial variable by $\partial_i$, we state the equations as follows. Let $\Omega\subseteqq \R^n$ be an open set, modeling a body in its non-deformed state. We {
 describe} the displacement field of that body as a function $u\colon (0,\infty)\times \Omega \to \R^n$. The conservation of momentum yields the following equality
\begin{equation}\label{eq:1}
 \mu(x)\partial_0^2 u(t,x) - \Diverg T(t,x) = F(t,x),
\end{equation}
where $\Diverg T (t,x) = \left(\sum_{j=1}^n \partial_j T_{ij}(t,x)\right)_i$ with  $T\colon (0,\infty)\times \Omega \to \R^{n\times n}$ being the stress tensor, $\mu\colon \Omega \to \R^{n\times n}$ the mass density matrix, and $F\colon (0,\infty)\times \Omega \to \R^n$ an external forcing term. Equation \eqref{eq:1} is completed  by a constitutive relation or material law, linking the stress tensor $T$ and the strain tensor 
{\[
  \eps(u)_{i,j} \coloneqq  \frac{1}{2}\left(\partial_i u_j + \partial_j u_i \right).
\]}
In this article, we treat a constitutive relation of fractional type:
\begin{equation}\label{eq:2}
  T(t,x) = C(x) \eps(u)(t,x) + \partial_0^\alpha D(x) \eps(u)(t,x),
\end{equation}
where{\footnote{The condition that $\alpha$ only ranges from $1/2$ to $1$ is rather technical. In fact, using other methods and -- at the same time -- imposing more conditions on the operator $C$ namely, besides its boundedness, one has to assume that it is selfadjoint and non-negative definite, we have shown in \cite[Theorem 4.1]{R.Picard2012}, that the range of $\alpha$ may vary in $(0,1]$.} } $\alpha\in [1/2,1]$ is given and the maps $C\colon \Omega \to \R^{\left(n\times n\right)^2}$ and $D\colon \Omega \to \R^{\left(n\times n\right)^2}$, obeying suitable symmetry requirements, represent the coefficients of instantaneous elasticity and the effects of the memory of the material, respectively, cf.~\cite[p. 183-4]{DuLi}. { We also refer to \cite[p. 24]{Bertram}. According to \cite{Bertram} material models of fractionl type are used to describe the visco-elastic behavior with less summands in comparison to material models with only integer order derivatives.} The equations \eqref{eq:1} and \eqref{eq:2} are subject to certain boundary and initial conditions for both $u$ and $T$. 

In the first part of this note we show that an abstract well-posedness result developed in \cite{Picard} for evolutionary equations applies to these equations. We furthermore give an account of generalizations of equations \eqref{eq:1} and \eqref{eq:2}. Well-posedness conditions are the (usual) constraints of selfadjointness and uniform ellipticity and boundedness for $D$ and $\mu$. We also refer to \cite[Section 4]{Trostorff2012b} for well-posedness conditions for other viscoelastic models.

The second part of this paper deals with the issue of homogenization, i.e., we consider the equations of fractional elasticity with highly oscillatory coefficients. In order to explain the idea, we give a simple example. We assume that $C,D$ and $\mu$ are given as functions on the whole of $\R^n$. Now, what is the behavior in the limit $k\to\infty$ of the sequence of displacement fields $(u_k)_k$ being the sequence of solutions of the following problems
\begin{equation}\label{eq:3_hom}
  \begin{cases}
     \mu(k x) \partial_0^2 u_k(t,x) - \Diverg T_k(t,x) = F(t,x), \\
     T_k(t,x) = C(k x)\eps(u_k)(t,x) + \partial_0^\alpha D(k x)\eps(u_k)(t,x),
  \end{cases}\quad (k\in\N)?
\end{equation}
 We show that there exists a weakly convergent subsequence with limit $u$ satisfying the following equation of similar form
\begin{equation}\label{eq:homo_limit}
\begin{cases}
    \mu_{\textnormal{hom}} \partial_0^2 u(t,x) - \Diverg  T(t,x) = F(t,x), \\
     T(t,x) = \partial_0^\alpha D_{\textnormal{hom}}\eps(u)(t,x)+\partial_0^\alpha \sum_{k=1}^\infty\left(-\sum_{\ell=1}^\infty (-\partial_0^{-\alpha})^\ell K_\ell D_{\textnormal{hom}}\right)^{k}\eps(u)(t,x),
\end{cases}
 \end{equation}
for suitable $\mu_{\textnormal{hom}}$, $D_{\textnormal{hom}}$ and a sequence of linear operators $(K_\ell)_\ell$. One can show that in the special case of $\mu$ and $D$ periodic, the operator $\mu_{\textnormal{hom}}$ is the same as the integral mean over the period of $\mu$, and $D_\textnormal{hom}$ is the usual homogenized matrix, see e.g.~\cite{BenLiPap,TarIntro}, also cp.~Remark \ref{rem:implicitly}(f). The reason for the limit equation to contain a power series in $\partial_0^{-\alpha}$ is due to the convergence of the operators involved. The latter can only be expected to be convergent in the weak operator topology, cf.~e.g.~\cite[Proposition 4.3 and Theorem 4.5]{WaurickMMA} for easy examples.


 In order to derive such a result, we have to extend recent results (\cite{Waurick2011Diss,Waurick2012Asy}) in the theory of homogenization of evolutionary equations. In particular, in comparison to \cite{Waurick2012Asy}, we need to establish a stronger version of a 'weak-strong principle'-result. 


\section{Setting}\label{sec:set}

We recall some notions and definitions given in \cite{Picard,Kalauch}. 
Throughout this section let $H$ be a Hilbert space. For $\nu>0$, let $$H_{\nu,0}(\R;H)\coloneqq L_2(\R,\exp(-2\nu x) \dd x;H)$$	 be the Bochner-Lebesgue space of $H$-valued square-integrable functions with respect to the weighted Lebesgue measure with Radon-Nikdodym derivative $\exp(-2\nu (\cdot))$. We denote by $\partial_{0,\nu}$ the closure of the mapping\footnote{We denote by $C_c^\infty(\R;H)$ the space of indefinitely differentiable functions with values in $H$.} 
\[
  C_c^\infty (\R;H)\subseteqq H_{\nu,0}(\R;H)\to H_{\nu,0}(\R;H):f\mapsto f'. 
\]
 The \emph{Fourier-Laplace transform} 
\[
  \s L_\nu f \coloneqq \left(x\mapsto \frac{1}{\sqrt{2\pi}}\int_\R e^{-\nu y -ixy}f(y)\dd y\right) 
\]
 defined for $f\in C_c^\infty(\R;H)$ extends to a unitary mapping $H_{\nu,0}(\R;H)\to L_2(\R;H)$. We use the same notation for this extension. Denoting by $mf \coloneqq (x\mapsto xf(x))$ the multiplication-by-argument-operator on $L_2(\R;H)$ considered with its maximal domain, we get the following spectral representation for $\partial_{0,\nu}$: 
\[
  \partial_{0,\nu} = \s L_\nu^* (im+\nu)\s L_\nu.
\]
In particular, $0\in\rho(\partial_{0,\nu})$. The spectral representation gives rise to a functional calculus for the bounded and normal operator $\partial_{0,\nu}^{-1}$. More precisely, denoting $B(r,r)\coloneqq \{ z\in \C; \abs{z-r}<r\}$, we let $M: B(r,r)\to L(H)$ be a bounded and analytic mapping taking values in $L(H)$, the space of bounded linear mappings on $H$, where $r>1/(2\nu)$. Then we define
\begin{equation}\label{eq:func_calc}
   M(\partial_{0,\nu}^{-1}) \coloneqq \s L_\nu^* M\left(\frac{1}{im +\nu}\right) \s L_\nu,
\end{equation}
where $\left(M\left(\frac{1}{im +\nu}\right)\phi\right)(t)\coloneqq M\left(\frac{1}{it+\nu}\right)\phi(t)$ for $t\in \R$ and $\phi\in C_c^\infty(\R;H)$. If the value of $\nu>0$ is clear from the context, we shall neglect the explicit reference to it in the notation of the derivative. Our main example of analytic functions of $\partial_0^{-1}$ is $\partial_0^\beta\coloneqq \partial_0^{\ceil{\beta}}\partial_0^{\beta-\ceil{\beta}}=\partial_0^{\ceil{\beta}}\left(\partial_0^{-1}\right)^{\ceil{\beta}-\beta}$, for some $\beta\in\R$. The values in the space of operators come into play, when we want to consider products such as $C\partial_0^{\alpha}$ for some $C\in L(H)$ or power series expressions as in equation \eqref{eq:homo_limit}.

Now, let $A\colon D(A)\subseteqq H\to H$ be a densely defined closed linear operator with nonempty resolvent set $\rho(A)$. If $\lambda \in \rho(A)$, then we define $H_{k}(A)$ to be the completion of $D(A^{\abs{k}})$ with respect to the norm $\abs{(A-\lambda)^k\cdot}_H$ for all $k\in\Z$. Note that $H_{k}(A)$ continuously embeds into $H_{k-1}(A)$ and that $A$ has a continuous extension from $H_k(A)$ to $H_{k-1}(A)$. We will use the same notation for that extension. Furthermore, there is a canonical extension of $A$ to a densely defined closed linear operator with nonempty resolvent set in the space $H_{\nu,0}(\R;H)$ in the way that $(Au)(t):= Au(t)$ for (a.e.) $t\in \R$ and $u\in H_{\nu,0}(\R;H_1(A))$. The latter extension can be made more precise with the help of tensor product constructions, see \cite{Tensor}, \cite[Section 1.2.3]{Picard} or Section \ref{sec:wsp} in this article. { Moreover, if $C\colon D(C)\subseteqq H_0\to H_1$ is a densely defined, closed linear operator from the Hilbert space $H_0$ to $H_1$ then it has a continuous extension as an operator from $H_0(\abs{C})$ to $H_{-1}(\abs{C^*}$, see e.g.~\cite[Lemma 2.1.16]{Picard} or \cite[Remark 2.1(a)]{TroWau2012}.} 

We denote $H_{\nu,k}(\R;H)\coloneqq H_{k}(\partial_{0})$ for all $k\in\Z$. Moreover, since $0\in\rho(\partial_{0})$, we choose $\abs{\partial_{0}^k\cdot}$ as the norm in $H_{\nu,k}(\R;H)$. With the latter choice the operator $\partial_{0}^\ell$ becomes unitary from $H_{\nu,k}(\R;H)$ to $H_{\nu,k-\ell}(\R;H)$ for all $k,\ell\in \Z$, $\nu>0$.

The well-posedness result which will apply to our functional analytic formulation of fractional elasticity reads as follows.

\begin{thm}[{\cite[Theorem 6.2.5]{Picard}}]\label{thm:solth} Let $H$ be a Hilbert space, $r>0$, $\nu>\frac{1}{2r}$. Let $A\colon D(A)\subseteqq H\to H$ be skew-selfadjoint, $M\colon B(r,r)\to L(H)$ bounded and analytic. Assume that there exists $c>0$ such that for all $z\in B(r,r)$ we have
\begin{equation}\label{cmat}
 \Re z^{-1} M(z)\geqq c.
\end{equation}
 Then the closure of the operator \[
 \partial_{0} M(\partial_{0}^{-1}) + A \colon H_{\nu,1}(\R;H_{1}(A))\subseteqq H_{\nu,0}(\R;H) \to H_{\nu,0}(\R;H),
                   \]
is continuously invertible, i.e.,\[
  S_\nu \coloneqq \overline{\partial_{0} M(\partial_{0}^{-1}) + A}^{-1} \in L(H_{\nu,0}(\R;H)).
                                                         \]
Moreover, $\Abs{S_\nu}\leqq 1/c$ and $S_\nu$ is \emph{causal} in the sense that for all $a\in \R$ and $f\in H_{\nu,0}(\R;H)$ we have
\[
    \1_{(-\infty,a]}(m_0) S_\nu f = \1_{(-\infty,a]}(m_0)S_\nu \1_{(-\infty,a]}(m_0) f,
\]
where $\1_{(-\infty,a]}(m_0)$ denotes the multiplication operator in $H_{\nu,0}(\R;H)$ associated with the cut-off function $\R\ni t\mapsto \1_{(-\infty,a]}(t)$.
\end{thm}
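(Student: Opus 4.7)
The plan is to use the spectral representation $\partial_{0}=\s L_\nu^{*}(im+\nu)\s L_\nu$ to reduce the bijectivity question to a pointwise coercivity estimate, and then to exploit the analyticity of $M$ to obtain causality.

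First I would verify that the hypothesis $\nu>1/(2r)$ guarantees $\tfrac{1}{it+\nu}\in B(r,r)$ for every $t\in\R$: the inequality $\bigl\lvert\tfrac{1}{it+\nu}-r\bigr\rvert^{2}<r^{2}$ is equivalent to $2r\nu>1$. Consequently $M\bigl(\tfrac{1}{it+\nu}\bigr)$ is well-defined and uniformly bounded in $t$, and with $z=\tfrac{1}{it+\nu}$ the assumption \eqref{cmat} rewrites as
\[
 \Re\bigl\langle (it+\nu)M\bigl(\tfrac{1}{it+\nu}\bigr)h,\,h\bigr\rangle_{H}\;\geqq\;c\,\|h\|^{2}\qquad (h\in H,\;t\in\R).
\]

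For strict accretivity, let $u\in H_{\nu,1}(\R;H_{1}(A))$. Skew-selfadjointness of $A$ gives $\Re\langle Au,u\rangle=0$, so by unitarity of $\s L_\nu$,
\[
 \Re\bigl\langle(\partial_{0}M(\partial_{0}^{-1})+A)u,\,u\bigr\rangle
 =\int_{\R}\Re\bigl\langle(it+\nu)M\bigl(\tfrac{1}{it+\nu}\bigr)(\s L_\nu u)(t),(\s L_\nu u)(t)\bigr\rangle_{H}\,dt\;\geqq\;c\,\|u\|^{2}.
\]
Cauchy--Schwarz yields $\|(\partial_{0}M(\partial_{0}^{-1})+A)u\|\geqq c\|u\|$; hence the operator is closable, its closure has closed range, and there is a bounded left inverse of norm $\leqq 1/c$. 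To obtain surjectivity, the same computation applied to the adjoint -- using $A^{*}=-A$ and, writing $M_{0}(t):=(it+\nu)M(\tfrac{1}{it+\nu})$, the identity $\Re\langle M_{0}(t)^{*}h,h\rangle_{H}=\Re\langle M_{0}(t)h,h\rangle_{H}\geqq c\|h\|^{2}$ -- shows that the adjoint of the closure is also injective. Therefore the range of the closure is dense and hence all of $H_{\nu,0}(\R;H)$, yielding $S_\nu\in L(H_{\nu,0}(\R;H))$ with $\|S_\nu\|\leqq 1/c$.

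For causality I would invoke the Paley--Wiener characterisation available in this weighted setting: $f\in H_{\nu,0}(\R;H)$ vanishes on $(-\infty,a]$ if and only if, after the substitution $s=\nu+it$, the Laplace transform $e^{-as}(\s L_\nu f)(s-\nu)$ lies in the Hardy space of the right half-plane $\{\Re s>\nu\}$. Since $M$ is analytic on $B(r,r)$ and $s\mapsto 1/s$ maps $\{\Re s>1/(2r)\}$ into $B(r,r)$, the operator-valued symbol $s\mapsto sM(1/s)+A$ is holomorphic in $\{\Re s>1/(2r)\}$, and by the coercivity estimate already established its inverse is holomorphic and uniformly bounded there. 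Multiplication by this inverse preserves the Hardy-space condition and hence the support property; conjugating back by $\s L_\nu$ this is exactly the causality assertion.

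The main obstacle I anticipate is the careful execution of the final step: one has to justify that the pointwise inverses $(sM(1/s)+A)^{-1}$ assemble into a measurable, $L(H)$-valued analytic function agreeing with $\s L_\nu S_\nu\s L_\nu^{*}$, and that the associated multiplication operator preserves the relevant Hardy-space. The unboundedness of $A$ (so that $sM(1/s)+A$ is not a bounded symbol) and the operator-valued nature of the multiplier make this Paley--Wiener step the technically most delicate point; the accretivity and bijectivity portions are, by comparison, routine once the functional calculus is set up.
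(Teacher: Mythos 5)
The paper does not prove this theorem: it is quoted from \cite[Theorem 6.2.5]{Picard}, so there is no internal proof to compare against. Your outline reproduces the standard argument from that source: the inclusion $\frac{1}{it+\nu}\in B(r,r)$ for $\nu>\frac{1}{2r}$, the pointwise coercivity $\Re\,(it+\nu)M\bigl(\tfrac{1}{it+\nu}\bigr)\geqq c$ combined with $\Re\langle Au,u\rangle=0$ to get strict accretivity and hence an injective closure with closed range and $\Abs{S_\nu}\leqq 1/c$, the analogous estimate for the adjoint to get density of the range, and a Paley--Wiener/Hardy-space argument for causality using boundedness and analyticity of $s\mapsto\bigl(sM(1/s)+A\bigr)^{-1}$ on $\{\Re s>\tfrac{1}{2r}\}$ (this last step is exactly how \cite[Theorem 6.1.1]{Picard}, invoked in Remark \ref{rem:solth}(c), establishes causality of $M(\partial_0^{-1})$ itself). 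The one place your sketch is thinner than it can afford to be is surjectivity: accretivity of the formal adjoint $M(\partial_0^{-1})^*\partial_0^*-A$ on the core $H_{\nu,1}(\R;H_1(A))$ shows that the \emph{closure of the formal adjoint} is injective, but an element of $R\bigl(\overline{\partial_0M(\partial_0^{-1})+A}\bigr)^{\perp}$ lies in the null space of the \emph{adjoint of the closure}, whose domain is a priori larger; one must extend the accretivity estimate to that full domain, e.g.\ by a regularization/commutator argument with $(1+\varepsilon\partial_0)^{-1}$, as is done in the reference. With that (standard, repairable) caveat, the proposal is correct and follows the same route as the cited proof.
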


\begin{rem}\label{rem:solth} (a) If we invoke the extrapolation spaces introduced above, it is possible to neglect the closure bar. Indeed, for $u\in H_{\nu,0}(\R;H)$, we have $Au\in H_{\nu,0}(\R;H_{-1}(A))$ and $\partial_0M(\partial_0^{-1})u\in H_{\nu,-1}(\R;H)$ and the operator $\partial_{0} M(\partial_{0}^{-1}) + A$, which is continuously invertible in $H_{\nu,0}(\R;H)$, has the (maximal) domain \[
\{ u\in H_{\nu,0}(\R;H);  \partial_{0} M(\partial_{0}^{-1})u + Au \in H_{\nu,0}(\R;H)\}.                                                                    \]
{ Moreover, since $\partial_0$ commutes with all operators in the operator sum $\partial_{0} M(\partial_{0}^{-1}) + A$, the solution theory from Theorem \ref{thm:solth} carries over to the complete extrapolation scale of $\partial_0$ in the sense that the inverse of $\partial_{0} M(\partial_{0}^{-1}) + A$ is a continuous operator from $H_{\nu,k}(\R;H)$ to $H_{\nu,k}(\R;H)$ for all $k\in \Z$.}  
 
(b) The heart of the solution theory is the positive definiteness condition \eqref{cmat}. This results in a positive definiteness condition for $\partial_{0} M(\partial_{0}^{-1}) + A$ in $H_{\nu,0}(\R;H)$. Indeed, a density argument shows that for all $u\in \{ v\in H_{\nu,0}(\R;H); \left( \partial_{0} M(\partial_{0}^{-1}) + A\right) v\in H_{\nu,0}(\R;H)\}$, we have 
\[
  c \abs{u}^2\leqq \Re\langle u,\left(\partial_{0} M(\partial_{0}^{-1}) + A\right)u\rangle. 
\]
 (c) Note that causality of the solution operator $S_\nu$ in Theorem \ref{thm:solth} relies on analyticity of $M$. Furthermore, it is possible to show that the operator $M(\partial_{0}^{-1})$ is causal itself (\cite[Theorem 6.1.1]{Picard}).

(d) Later on, we will use the following fact concerning causal operators. For all $a\in \R$, the solution operator $S_\nu$ leaves $\chi_{(a,\infty)}(m_0)[H_{\nu,0}(\R;H)]$, the Hilbert space of $H$-valued $H_{\nu,0}$-functions vanishing outside $[a,\infty)$, invariant. Indeed, let $a\in \R$ then
\begin{align*}
 \chi_{(-\infty,a]}(m_0)S_\nu\chi_{(a,\infty)}(m_0)&=\chi_{(-\infty,a]}(m_0)S_\nu\left(1-\chi_{(-\infty,a]}(m_0)\right) \\
                                            &=\chi_{(-\infty,a]}(m_0)S_\nu-\chi_{(-\infty,a]}(m_0)S_\nu\chi_{(-\infty,a]}(m_0) \\
                      			    &=\chi_{(-\infty,a]}(m_0)S_\nu\chi_{(-\infty,a]}(m_0)-\chi_{(-\infty,a]}(m_0)S_\nu\chi_{(-\infty,a]}(m_0) \\
                                            &=0.
\end{align*}
(e) The solution operator $S_\nu$ is independent of $\nu$ in the sense that if $\mu\geqq \nu$ then $S_\nu$ and $S_\mu$ coincide on the intersection of the respective domains. For a proof of this fact see \cite[Theorem 6.1.4]{Picard} or \cite[Theorem 1.4.2]{Waurick2011Diss}.
\end{rem}

\section{A functional analytic formulation for fractional elasticity}\label{sec:hom_res_appl}
 
 Let $\Omega\subseteqq \R^n$ be an open set. First, we introduce the spaces on which the spatial \mbox{(derivative-)} operators are defined.
 Denote by $H_\textnormal{sym}(\Omega)$ the subspace of $L_2(\Omega)^{n\times n}$ containing only (equivalence classes of) functions with values in the space of symmetric matrices. We endow $H_{\textnormal{sym}}(\Omega)$ with the scalar product
\[
  \left(\Phi,\Psi\right)\mapsto \int_\R \trace (\Phi(x)^*\Psi(x))\dd x
\]
to get a Hilbert space and denote by $\Grad_0$ the closure of the mapping 
\[
 L_2(\Omega)^n\supseteqq C_c^\infty(\Omega)^n\ni \Phi \mapsto \left(\frac{1}{2}\left(\partial_i \Phi_j+\partial_j\Phi_i\right)\right)_{i,j\in\{1,\ldots,n\}}\in H_{\textnormal{sym}}(\Omega),
\]
 where $\partial_i$ denotes the (distributional) derivative with respect to the $i$'th variable. It can be shown that the negative adjoint of $\Grad_0$ is the distributional divergence applied to the rows of the matrices in the space $H_\textnormal{sym}(\Omega)$, i.e.,
\[
  -\Grad_0^* \eqqcolon \Diverg \colon D(\Diverg)\subseteqq H_{\textnormal{sym}}(\Omega) \to L_2(\Omega)^n, \Phi\mapsto \left(\sum_{j=1}^n \partial_j \Phi_{ij}\right)_{i\in\{1,\ldots,n\}},
\]
where $D(\Diverg)= \left\{ \Phi\in H_{\textnormal{sym}}(\Omega); \left(\sum_{j=1}^n \partial_j \Phi_{ij}\right)_{i\in\{1,\ldots,n\}} \in L_2(\Omega)^n \right\}$. Similarly, denoting by $\Diverg_0$ the closure of the mapping
\[
  C_c^\infty(\Omega)^{n\times n}\cap H_{\textnormal{sym}}(\Omega) \subseteqq H_{\textnormal{sym}}(\Omega) \to L_2(\Omega)^n, \Phi\mapsto \left(\sum_{j=1}^n \partial_j \Phi_{ij}\right)_{i\in\{1,\ldots,n\}},
\]
 we define\footnote{We note here that for a function being in the domain of $\Grad_0$ is a generalization of vanishing at the boundary of $\Omega$, i.e., homogeneous  Dirichlet boundary conditions. To be in the domain of $\Diverg_0$ is a generalization of homogeneous Neumann boundary conditions.} $\Grad\coloneqq -\Diverg_0^*$, which is an extension of $\Grad_0$ acting as the distributional (symmetrized) gradient with maximal domain in $L_2(\Omega)^n$.

We recall the building blocks for the equations of (fractional) elasticity. 
The displacement field $u$ is related to the stress tensor $T$ via 
\begin{equation}\label{eq:displ-stress}
 \mu\partial_0^2 u - \Diverg T = F,
\end{equation}
where $\mu$ is a strictly positive definite continuous selfadjoint operator. Now, the stress tensor $T$ depends on the strain tensor $\eps(u) = \Grad u$ and its fractional time-derivative $\Grad \partial_0^{\alpha} u$ ($1/2\leqq \alpha\leqq 1$) in the way that
\begin{equation}\label{eq:strain-temp}
  T = C\Grad u + D\Grad \partial_0^\alpha u,
\end{equation}
for bounded operators $C$ and $D$, where the latter is selfadjoint and strictly positive definite. Note that $\Abs{\partial_{0}^\beta}\leqq \nu^\beta$ for all $\beta\leqq 0$. Differentiating equation \eqref{eq:strain-temp} with respect to time, introducing the unknown $v\coloneqq \partial_0 u$ and choosing $\nu$ large enough, we arrive at
\[
   \Grad v = (C+ \partial_0^\alpha D)^{-1} \partial_0 T.
 \]
Indeed, using the estimate $\Abs{\partial_{0}^\beta}\leqq \nu^\beta$ for all $\beta\leqq 0$ and the bounded invertibility of $D$, we deduce that $C+ \partial_0^\alpha D$ is continuously invertible with inverse being representable by a Neumann series expression.  { In fact, we deduce that 
\[
  \Abs{C(\partial_0^{\alpha}D)^{-1}}\leqq \Abs{C}\Abs{D^{-1}\partial_0^{-\alpha}}\leqq \Abs{C}\Abs{D^{-1}}\nu^{-\alpha}.  
\]
Thus, choosing $\nu>0$ sufficiently large, we get that $\Abs{C(\partial_0^{\alpha}D)^{-1}}<1$.} 
{ Hence, for sufficiently large $\nu>0$, } 
\begin{align*}
  \left(C+ \partial_0^\alpha D\right)^{-1}& = \left(\partial_0^\alpha D\right)^{-1}\left( C \left(\partial_0^\alpha D\right)^{-1}+1\right)^{-1} \\ 
     & = \left(\partial_0^\alpha D\right)^{-1}+\left(\partial_0^\alpha D\right)^{-1}\sum_{k=1}^\infty \left(- C \left(\partial_0^\alpha D\right)^{-1}\right)^k \\
     & = \left(\partial_0^\alpha D\right)^{-1}+\partial_0^{-2\alpha} D^{-1}\sum_{k=1}^\infty \partial_0^{-(k-1)\alpha }\left(- C D^{-1}\right)^k.
\end{align*}
We get the following system, written in block operator matrix form,
\begin{equation}\label{eq:th-el_notred}
   \left( \partial_0\begin{pmatrix} \mu & 0  \\ 0 & (C+D\partial_0^\alpha)^{-1} 
                     \end{pmatrix}
- \begin{pmatrix} 0 & \Diverg \\ \Grad & 0 
                     \end{pmatrix}\right)\begin{pmatrix} v \\ T 
\end{pmatrix} = \begin{pmatrix} F \\ 0 
\end{pmatrix}.
\end{equation}
Equation \eqref{eq:th-el_notred} is of the form stated in Theorem \ref{thm:solth}, which can be seen by setting formally
\[
   M(\partial_0^{-1})\coloneqq \begin{pmatrix} \mu & 0  \\ 0 & (C+D\partial_0^\alpha)^{-1} 
                     \end{pmatrix}
\]
and 
\[
  A\coloneqq - \begin{pmatrix} 0 & \Diverg \\ \Grad & 0 
                     \end{pmatrix}.
\]
 However, in order to obtain skew-selfadjointness of $A$, boundary conditions have to be imposed.  The first well-posedness theorem treats homogeneous Dirichlet boundary conditions and the second one is about homogeneous Neumann boundary conditions.

\begin{thm}[homogeneous Dirichlet boundary conditions]\label{thm:solth_frac_elast_D} Let $\Omega\subseteqq \R^n$ be an open set, $\alpha\in [1/2,1]$, $K>0$. Let $\mu \in L\left(L_2(\Omega)^n\right), C,D \in L\left( H_{\textnormal{sym}}(\Omega)\right)$. We assume $\mu,D$ to be selfadjoint and that there exists $c>0$ such that $\mu\geqq c$ and $D\geqq c$. Then there exists $\nu_0>0$ and  such that for all $\nu\geqq \nu_0$ and $(F,G) \in H_{\nu,0}\left(\R;L_2(\Omega)^n\oplus H_{\textnormal{sym}}(\Omega)\right)$ there is a unique $(v,T)\in H_{\nu,0}\left(\R;L_2(\Omega)^n\oplus H_{\textnormal{sym}}(\Omega)\right)$ with
\[
 \left( \partial_0\begin{pmatrix} \mu & 0  \\ 0 & (C+D\partial_0^\alpha)^{-1} 
                     \end{pmatrix}
- \begin{pmatrix} 0 & \Diverg \\ \Grad_0 & 0 
                     \end{pmatrix}\right)\begin{pmatrix} v \\ T 
\end{pmatrix} = \begin{pmatrix} F \\ G
\end{pmatrix}.
\]
Moreover, the estimate 
\[
   \abs{(v,T)}_{H_{\nu,0}\left(\R;L_2(\Omega)^n\oplus H_{\textnormal{sym}}(\Omega)\right)}\leqq K\abs{(F,G)}_{H_{\nu,0}\left(\R;L_2(\Omega)^n\oplus H_{\textnormal{sym}}(\Omega)\right)}
\]
is satisfied. 
\end{thm}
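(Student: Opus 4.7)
The plan is to bring the system into the abstract form of Theorem \ref{thm:solth} on the Hilbert space $H \coloneqq L_2(\Omega)^n \oplus H_{\textnormal{sym}}(\Omega)$, with the natural ingredients
\[
A \coloneqq -\begin{pmatrix} 0 & \Diverg \\ \Grad_0 & 0 \end{pmatrix},\qquad
M(z) \coloneqq \begin{pmatrix} \mu & 0 \\ 0 & z^\alpha(Cz^\alpha+D)^{-1} \end{pmatrix}.
\]
Skew-selfadjointness of $A$ follows directly from the definitions $\Diverg = -\Grad_0^*$ and, by closedness of $\Grad_0$, $\Diverg^* = -\Grad_0$; a standard block-matrix computation then gives $A^* = -A$.

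Analyticity and boundedness of $M$ on a disk $B(r,r)$ are obtained by choosing $r>0$ so small that $\Abs{Cz^\alpha D^{-1}}<1/2$ for all $z\in B(r,r)$, using the principal branch of $z^\alpha$ (well-defined on $B(r,r)\subseteqq\{\Re z>0\}$). A Neumann series then gives
\[
(Cz^\alpha+D)^{-1} = D^{-1}\sum_{k=0}^\infty (-D^{-1}Cz^\alpha)^k
\]
explicitly and holomorphically, so $M\colon B(r,r)\to L(H)$ is bounded and analytic. Matching this series against the Neumann expansion of $(C+D\partial_0^\alpha)^{-1}$ already carried out just before \eqref{eq:th-el_notred} shows, via the functional calculus \eqref{eq:func_calc}, that for $\nu > 1/(2r)$ the operator $M(\partial_0^{-1})$ really is the material law appearing in the system.

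The main obstacle is the positive-definiteness condition $\Re z^{-1}M(z)\geqq c_0 I$ on $B(r,r)$, in particular its lower-right entry $\Re[z^{\alpha-1}(Cz^\alpha+D)^{-1}]$. The upper block gives $\Re z^{-1}\mu \geqq c/(2r)$ immediately, since $\Re z^{-1}\geqq 1/(2r)$ and $\mu\geqq c$. For the lower block, expand
\[
z^{\alpha-1}(Cz^\alpha+D)^{-1} = z^{\alpha-1}D^{-1} + \sum_{k=1}^\infty (-1)^k z^{(k+1)\alpha-1}(D^{-1}C)^k D^{-1}.
\]
Writing $z = \abs{z}e^{i\theta}$ with $\abs{\theta}<\pi/2$ and using $\alpha-1\in[-1/2,0]$, the angular estimate
\[
\Re z^{\alpha-1} = \abs{z}^{\alpha-1}\cos((\alpha-1)\theta)\geqq \frac{(2r)^{\alpha-1}}{\sqrt{2}}
\]
(valid since $\abs{z}\leqq 2r$ and $(\alpha-1)\theta$ stays in $(-\pi/4,\pi/4)$) yields the leading lower bound $\Re(z^{\alpha-1})D^{-1}\geqq (2r)^{\alpha-1}/(\sqrt{2}\,\Abs{D})$, while the tail $k\geqq 1$ is bounded in norm by $C_1(2r)^{2\alpha-1}$ for a constant $C_1$ depending only on $\Abs{C}$ and $\Abs{D^{-1}}$. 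Since $\alpha\geqq 1/2$, the ratio of tail bound to leading lower bound is $O((2r)^\alpha)$ and can be made less than $1/2$ by shrinking $r$; this is exactly where the restriction $\alpha\in[1/2,1]$ enters, consistent with the footnote after \eqref{eq:2}.

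Fixing $\nu_0 > 1/(2r)$ accordingly, Theorem \ref{thm:solth} applies for every $\nu\geqq\nu_0$ and supplies existence, uniqueness and the norm estimate $\abs{(v,T)}\leqq (1/c_0)\abs{(F,G)}$ on $H_{\nu,0}(\R;H)$. This establishes the claim with $K=1/c_0$.
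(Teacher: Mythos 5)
Your proposal is correct and follows essentially the same route as the paper: both reduce to verifying the hypotheses of Theorem \ref{thm:solth}, expand $(C+z^{-\alpha}D)^{-1}$ in a Neumann series, bound the tail by $O\bigl((2r)^{2\alpha-1}\bigr)$, and bound the leading term from below via $\Re z^{\alpha-1}\geqq \text{const}\cdot(2r)^{\alpha-1}$, using $\alpha\geqq 1/2$ in exactly the same two places (the paper parametrizes $z^{-1}=it+\nu$ where you use the polar form of $z$, which costs you only the harmless factor $1/\sqrt{2}$). The one loose end is the quantifier on $K$: it is prescribed in the statement, so you should add that your lower bound $c_0\geqq (2r)^{\alpha-1}/(2\sqrt{2}\,\Abs{D})$ can be pushed above $1/K$ by shrinking $r$ further (as the paper does in its final sentence), rather than concluding only ``with $K=1/c_0$''.
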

Similarly, we have a corresponding result for homogeneous Neumann boundary conditions.
\begin{thm}[homogeneous Neumann boundary conditions]\label{thm:solth_frac_elast_N} Let $\Omega\subseteqq \R^n$ be an open set, $\alpha\in [1/2,1]$, $K>0$. Let $\mu \in L\left(L_2(\Omega)^n\right), C,D \in L\left( H_{\textnormal{sym}}(\Omega)\right)$. We assume $\mu,D$ to be selfadjoint and that there exists $c>0$ such that $\mu\geqq c$ and $D\geqq c$. Then there exists $\nu_0>0$ such that for all $\nu\geqq \nu_0$ and $(F,G) \in H_{\nu,0}\left(\R;L_2(\Omega)^n\oplus H_{\textnormal{sym}}(\Omega)\right)$ we have a unique pair $(v,T)\in H_{\nu,0}\left(\R;L_2(\Omega)^n\oplus H_{\textnormal{sym}}(\Omega)\right)$ satisfying
\[
 \left( \partial_0\begin{pmatrix} \mu & 0  \\ 0 & (C+D\partial_0^\alpha)^{-1} 
                     \end{pmatrix}
- \begin{pmatrix} 0 & \Diverg_0 \\ \Grad & 0 
                     \end{pmatrix}\right)\begin{pmatrix} v \\ T 
\end{pmatrix} = \begin{pmatrix} F \\ G
\end{pmatrix}.
\]
Moreover, the estimate 
\[
   \abs{(v,T)}_{H_{\nu,0}\left(\R;L_2(\Omega)^n\oplus H_{\textnormal{sym}}(\Omega)\right)}\leqq K\abs{(F,G)}_{H_{\nu,0}\left(\R;L_2(\Omega)^n\oplus H_{\textnormal{sym}}(\Omega)\right)}
\]
is satisfied. 
\end{thm}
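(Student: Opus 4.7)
The plan is to cast the problem in the abstract form of Theorem \ref{thm:solth} with
\[
A \coloneqq -\begin{pmatrix} 0 & \Diverg_0 \\ \Grad & 0 \end{pmatrix}
\quad\text{and}\quad
M(z) \coloneqq \begin{pmatrix} \mu & 0 \\ 0 & (C+Dz^{-\alpha})^{-1} \end{pmatrix}
\]
acting on $L_2(\Omega)^n \oplus H_{\textnormal{sym}}(\Omega)$, arriving at the block equation \eqref{eq:th-el_notred}. In contrast to the Dirichlet analogue Theorem \ref{thm:solth_frac_elast_D}, the only structural change is the verification of skew-selfadjointness of $A$: from the definitions in Section \ref{sec:hom_res_appl} one has $\Grad = -\Diverg_0^*$, and passing to adjoints (using that $\Diverg_0$ is closed) gives $\Grad^{*} = -\Diverg_0$, so the off-diagonal block operator satisfies $A^{*} = -A$.

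For the boundedness and analyticity of $M \colon B(r,r) \to L(L_2(\Omega)^n \oplus H_{\textnormal{sym}}(\Omega))$, the Neumann series computation performed after \eqref{eq:th-el_notred} applies verbatim: factoring
\[
(C+Dz^{-\alpha})^{-1} = z^\alpha D^{-1}\bigl(1+Cz^\alpha D^{-1}\bigr)^{-1}
\]
and using $\Abs{Cz^\alpha D^{-1}} \leqq \Abs{C}\Abs{D^{-1}} r^\alpha$, the series converges uniformly on $B(r,r)$ once $r$ is small. The $\mu$-block is constant, so $M$ is bounded and analytic on $B(r,r)$.

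The technical heart of the proof is the positive definiteness condition \eqref{cmat}. The $\mu$-block is easy: $\Re z^{-1} > 1/(2r)$ on $B(r,r)$ combined with $\mu \geqq c$ yields $\Re z^{-1}\mu \geqq c/(2r)$. For the second block, the expansion above gives
\[
z^{-1}(C+Dz^{-\alpha})^{-1} = z^{\alpha-1}D^{-1} + R(z), \qquad \Abs{R(z)} = \mathcal{O}(\abs{z}^{2\alpha-1}).
\]
The restriction $\alpha \in [1/2,1]$ enters decisively: $|\arg z| < \pi/2$ on $B(r,r)$ together with $|\alpha-1| \leqq 1/2$ gives $|(\alpha-1)\arg z| < \pi/4$, so $\Re z^{\alpha-1} \geqq \abs{z}^{\alpha-1}/\sqrt{2} \geqq (2r)^{\alpha-1}/\sqrt{2}$. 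Combined with $D^{-1} \geqq \Abs{D}^{-1}$ this yields a uniform positive lower bound for the leading term, while $\Abs{R(z)}/(2r)^{\alpha-1}$ is $\mathcal{O}(r^\alpha)$ and hence vanishes as $r \to 0$. For $r$ small enough the leading term dominates and \eqref{cmat} holds with a strictly positive $c'$; Theorem \ref{thm:solth} then yields the asserted existence, uniqueness and norm estimate with $\nu_0 \coloneqq 1/(2r)$. The main obstacle is exactly this positivity calibration: one has to choose $r$ small enough both to guarantee convergence of the Neumann expansion and to ensure that $z^{\alpha-1}D^{-1}$ strictly dominates $R(z)$ uniformly on $B(r,r)$, which is precisely where the assumption $\alpha \in [1/2,1]$ (also discussed in the footnote on the theorem) is needed.
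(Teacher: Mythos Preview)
Your proof is correct and follows essentially the same strategy as the paper's: reduce to Theorem~\ref{thm:solth} via the skew-selfadjointness of the block spatial operator and verify the positivity condition \eqref{cmat} block-wise, using the Neumann expansion of $(C+Dz^{-\alpha})^{-1}$. The only cosmetic difference is that the paper parametrizes $z^{-1}=it+\nu$ and records the explicit lower bound $\Re z^{-1}(C+z^{-\alpha}D)^{-1}\geqq \nu^{1-\alpha}/\Abs{D}-2\nu^{1-2\alpha}\Abs{D^{-1}CD^{-1}}$, whereas you obtain the same conclusion via the angle estimate $\lvert(\alpha-1)\arg z\rvert<\pi/4$; both routes use selfadjointness of $\mu$ and $D$ to pass from $\Re(z^{-1}\mu)$ and $\Re(z^{\alpha-1}D^{-1})$ to $(\Re z^{-1})\mu$ and $(\Re z^{\alpha-1})D^{-1}$, and both need $2\alpha-1\geqq 0$ so the remainder is subdominant.
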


{ 
\begin{rem}\label{rem:dependence-of-rho} It will be obvious from the proof that the choice of $\nu_0$ in the above two Theorems can be done in a way such that $\nu_0$ only depends on $K$, the operator norms of $C,\mu,D$ and the constant of positive definiteness for $\mu$ and $D$. 
\end{rem}
}
The only difference between the Dirichlet and the Neumann case is that they have different spatial operators, i.e., in the former case we consider $\begin{pmatrix} 0 & \Diverg \\ \Grad_0 & 0 \end{pmatrix}$, whereas in the latter case we consider $\begin{pmatrix} 0 & \Diverg_0 \\ \Grad & 0  \end{pmatrix}$. The relevant common property of both these operators is that they are skew-selfadjoint. Thus, regarding the proofs of both the Theorems \ref{thm:solth_frac_elast_D} and \ref{thm:solth_frac_elast_N}, we only have to verify that the positive definiteness condition in Theorem \ref{thm:solth} is satisfied. We emphasize that we require $\mu,C,D$ to be bounded linear operators in suitable spaces. In particular, the representation as multiplication operators as in the introduction is not needed here, also cp.~Example \ref{non-local-in} below.
\begin{proof}[Proof of Theorems \ref{thm:solth_frac_elast_D} and \ref{thm:solth_frac_elast_N}] We have to verify that there exists $\nu_0>0$ such that for all $z\in B(1/(2\nu_0),1/(2\nu_0))$ we have
\begin{equation}\label{eq:wpc_ND}
   \Re \left( z^{-1} \begin{pmatrix} \mu & 0 \\ 0 & (C+z^{-\alpha}D)^{-1} \end{pmatrix} \right) \geqq 1/K.
\end{equation}
 In order to prove \eqref{eq:wpc_ND}, we observe that due to $\mu\geqq c$, $z^{-1}= it+\nu$ for some $t\in \R$ and $\nu>\nu_0$ we have that
\[
  \Re z^{-1}\mu =\Re (it+\nu)\mu=\nu \mu \geqq \nu c.
\]
Thus, choose $\nu_0$ sufficiently large to get the positive definiteness condition of the top left corner in \eqref{eq:wpc_ND}. The second diagonal entry in \eqref{eq:wpc_ND} is a bit more delicate. Let $\nu_0$ be such that $\Abs{CD^{-1}}\nu^{-\alpha}<\frac{1}{2}$ for all $\nu\geqq \nu_0$. Using a Neumann expansion, we get for $z\in B(1/(2\nu_0),1/(2\nu_0))$ that 
\[
   (C+z^{-\alpha}D)^{-1}=\left(z^{-\alpha} D\right)^{-1}+z^{2\alpha} D^{-1}\sum_{k=1}^\infty z^{(k-1)\alpha} \left(- C D^{-1}\right)^k.
\]
With the help of the estimates
\begin{align*}
  & \Abs{z^{2\alpha-1} D^{-1}\sum_{k=1}^\infty z^{(k-1)\alpha} \left(- C D^{-1}\right)^k} \\
  & \leqq \nu^{1-2\alpha}\Abs{D^{-1}\sum_{k=1}^\infty z^{(k-1)\alpha} \left(- C D^{-1}\right)^k}\\
  & \leqq \nu^{1-2\alpha} \Abs{D^{-1}C D^{-1}}\Abs{\sum_{k=0}^\infty z^{k\alpha} \left(- C D^{-1}\right)^k}\\
  & \leqq 2 \nu^{1-2\alpha} \Abs{D^{-1}C D^{-1}},
\end{align*}
 and $D^{-1}\geqq 1/\Abs{D}$ (use the spectral theorem) we get for $z\in B(1/(2\nu_0),1/(2\nu_0))$ with $z^{-1}=it+\nu$ for some $t\in \R$, $\nu>\nu_0$ that
\begin{align}
  \notag \Re z^{-1}(C+z^{-\alpha}D)^{-1} & = \Re z^{-1}\left(\left(z^{-\alpha} D\right)^{-1}+z^{2\alpha} \left(D\right)^{-1}\sum_{k=1}^\infty z^{(k-1)\alpha} \left(- C D^{-1}\right)^k\right) \\
  \label{cruc:estim}                     & \geqq \nu^{1-\alpha} /\Abs{D}- 2 \nu^{1-2\alpha} \Abs{D^{-1}C D^{-1}}.
\end{align}
 Now, we observe that by choosing $\nu_0$ large, we have that the right-hand side in \eqref{cruc:estim} is eventually larger than $1/{K}$.
\end{proof}

\begin{rem}\label{red:system}
Of course in both the Theorems \ref{thm:solth_frac_elast_D} and \ref{thm:solth_frac_elast_N} the solution operator is causal. 
\end{rem}

{ 
For later treatment of homogenization, we note here that we need a similar well-posedness result for a slightly modified system. After having proved the well-posedness of the modified system we state its interconnection to the original one. We shall only discuss the Neumann boundary value problem as the Dirichlet boundary value problem can be dealt with similarly.

\begin{thm}\label{thm:solth_mod} Let $\Omega\subseteqq \R^n$ be an open set and such that $R(\Grad)$, the range of the symmetrized gradient, is a closed\footnote{The closedness condition of the range of the symmetrized gradient happens to be the case if one assumes that the domain of $\Grad$, endowed with the graph norm, is compactly embedded in $L_2(\Omega)^n$. This, in turn, can be realized if $\Omega$ is bounded and satisfies suitable geometric requirements, see e.g.\ \cite{Weck1994} and the references therein. In the homogeneous Dirichlet case the closedness of the range is warranted if one assumes that $\Omega$ is bounded. Indeed, the closedness follows by Korn's inequality.} subset of $H_{\textnormal{sym}}(\Omega)$, $\alpha\in [1/2,1]$, $K>0$. We denote by $\Pi\colon H_{\textnormal{sym}}(\Omega)\to R(\Grad)$ the orthogonal projector onto $R(\Grad)$. Let $\mu \in L\left(L_2(\Omega)^n\right), C,D \in L\left( H_{\textnormal{sym}}(\Omega)\right)$. We assume $\mu,D$ to be selfadjoint and that there exists $c>0$ such that $\mu\geqq c$ and $D\geqq c$.  Then there exists $\nu_0>0$ such that for all $\nu\geqq \nu_0$ and $(F,\tilde G) \in H_{\nu,0}\left(\R;L_2(\Omega)^n\oplus R(\Grad)\right)$ there is a unique $(v,\tilde T)\in H_{\nu,0}\left(\R;L_2(\Omega)^n\oplus R(\Grad)\right)$ with
\[
 \left( \partial_0\begin{pmatrix} \mu & 0  \\ 0 & \left(\Pi(C+D\partial_0^\alpha)\Pi^*\right)^{-1} 
                     \end{pmatrix}
- \begin{pmatrix} 0 & \Diverg_0\Pi^* \\ \Pi\Grad & 0 
                     \end{pmatrix}\right)\begin{pmatrix} v \\ \tilde T 
\end{pmatrix} = \begin{pmatrix} F \\ \tilde G
\end{pmatrix}.
\]
Moreover, the estimate 
\[
   \abs{(v,\tilde T)}_{H_{\nu,0}\left(\R;L_2(\Omega)^n\oplus R(\Grad)\right)}\leqq K\abs{(F,\tilde G)}_{H_{\nu,0}\left(\R;L_2(\Omega)^n\oplus R(\Grad)\right)}
\]
is satisfied. 
\end{thm}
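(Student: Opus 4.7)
The plan is to apply Theorem \ref{thm:solth} on the Hilbert space $L_2(\Omega)^n \oplus R(\Grad)$ with spatial operator $A \coloneqq -\begin{pmatrix} 0 & \Diverg_0 \Pi^* \\ \Pi \Grad & 0 \end{pmatrix}$ and material law $M(z) \coloneqq \begin{pmatrix} \mu & 0 \\ 0 & (\Pi(C+z^{-\alpha}D)\Pi^*)^{-1}\end{pmatrix}$, so that the strategy parallels closely that of Theorem \ref{thm:solth_frac_elast_N}.

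First I would establish skew-selfadjointness of $A$. Since $\Pi \colon H_{\textnormal{sym}}(\Omega) \to R(\Grad)$ is the orthogonal projection onto the closed subspace $R(\Grad)$, its adjoint $\Pi^*$ is the canonical isometric inclusion $R(\Grad) \hookrightarrow H_{\textnormal{sym}}(\Omega)$. Introducing $B \coloneqq \Pi \Grad$ with domain $D(\Grad) \subseteqq L_2(\Omega)^n$ and codomain $R(\Grad)$, we have $Bx = \Grad x$ for all $x \in D(\Grad)$ (since $\Grad x \in R(\Grad)$ automatically); hence $B$ is densely defined, and it is closed because $R(\Grad)$ is closed and $\Grad$ itself is closed. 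A standard integration-by-parts argument, together with $\Grad = -\Diverg_0^*$, then yields $B^* = -\Diverg_0 \Pi^*$, so that $A$ has the usual off-diagonal form $\begin{pmatrix} 0 & B^* \\ -B & 0 \end{pmatrix}$ and is skew-selfadjoint on $L_2(\Omega)^n \oplus R(\Grad)$.

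Next I would verify the positive definiteness condition \eqref{cmat} for $M$. Set $\tilde C \coloneqq \Pi C \Pi^* \in L(R(\Grad))$ and $\tilde D \coloneqq \Pi D \Pi^* \in L(R(\Grad))$. Because $\Pi^*$ is the isometric inclusion, $\tilde D$ is selfadjoint and for every $\varphi \in R(\Grad)$ one has
\[
 \scp{\tilde D \varphi}{\varphi} = \scp{D \Pi^* \varphi}{\Pi^* \varphi} \geqq c \abs{\Pi^* \varphi}^2 = c \abs{\varphi}^2,
\]
so $\tilde D \geqq c$ on $R(\Grad)$. Since $\partial_0^\alpha$ commutes with every spatial operator, the bottom-right entry of $M(\partial_0^{-1})$ equals $(\tilde C + \partial_0^\alpha \tilde D)^{-1}$. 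Consequently the situation is reduced to precisely that of Theorem \ref{thm:solth_frac_elast_N} with $(\tilde C, \tilde D)$ replacing $(C, D)$ and the same coercivity constant $c$; the Neumann series expansion together with the estimate \eqref{cruc:estim} carries over verbatim, producing some $\nu_0 > 0$, depending only on $K$, $\abs{\mu}$, $\abs{C}$, $\abs{D}$ and $c$, such that \eqref{cmat} holds with constant $1/K$ for all $\nu \geqq \nu_0$.

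I do not foresee a serious obstacle: the only subtle points are that compression by $\Pi$ must preserve both selfadjointness and coercivity of $D$ (guaranteed by $\Pi^*$ being an isometric inclusion) and that $B = \Pi \Grad$ must be closed (which is exactly where the closedness hypothesis on $R(\Grad)$ enters). Granted these two facts, Theorem \ref{thm:solth}, together with the extrapolation remark in Remark \ref{rem:solth}(a), directly yields the unique solvability of the system in $H_{\nu,0}(\R; L_2(\Omega)^n \oplus R(\Grad))$ and the claimed norm bound by $K$.
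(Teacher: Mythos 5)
Your proposal is correct and follows essentially the same route as the paper: closedness of $\Pi\Grad$ (since $\Pi$ acts as the identity on $R(\Grad)$), the identity $(\Pi\Grad)^*=-\Diverg_0\Pi^*$ giving skew-selfadjointness of the reduced spatial operator, and reduction of the material-law condition to Theorem \ref{thm:solth_frac_elast_N} via $\tilde C=\Pi C\Pi^*$, $\tilde D=\Pi D\Pi^*$ with $\tilde D$ selfadjoint and $\tilde D\geqq c$. No gaps.
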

\begin{proof}
 The proof is similar to the one for the Theorems \ref{thm:solth_frac_elast_D} and \ref{thm:solth_frac_elast_N}. First of all we observe that the operator $\begin{pmatrix} 0 & \Diverg_0\Pi^* \\ \Pi\Grad & 0 \end{pmatrix}$ is skew-selfadjoint in $L_2(\Omega)^n\oplus R(\Grad)$. Observe that $\Pi \Grad$ is closed since $\Pi|_{R(\Grad)}$ is the identity. For the skew-selfadjointness it thus suffices to show that $\left(\Pi\Grad\right)^*=-\Diverg_0\Pi^*$. So, let $\Phi\in D(\left(\Pi\Grad\right)^*)$. Then for all $w\in D\left(\Pi\Grad\right)=D\left(\Grad\right)$ we have
\[
   \langle \left(\Pi\Grad\right)^*\Phi,w\rangle = \langle \Phi, \Pi\Grad w\rangle = \langle \Pi^* \Phi,\Grad w\rangle.
\]
Hence, $\Pi^*\Phi\in D(\Grad^*)=D(\Diverg_0)$ and $- \Diverg_0 \Pi^*\Phi=\left(\Pi\Grad\right)^*\Phi$. Therefore, $\left(\Pi\Grad\right)^*\subseteqq -\Diverg_0\Pi^*$. On the other hand, let $\Psi\in D(-\Diverg_0\Pi^*)$ and let $w\in D(\Grad)=D(\Pi\Grad)$. Then
\[
  \langle \Psi,\Pi\Grad w\rangle = \langle \Pi^*\Psi, \Grad w\rangle = \langle \Grad^*\Pi^*\Psi, w\rangle.
\]
Hence, $\Psi \in D((\Pi\Grad)^*)$ and $(\Pi\Grad)^*\Psi = -\Diverg_0\Pi^*\Psi$. 

Next, we need to verify that the well-posedness condition is satisfied. For this observe that the Fourier-Laplace transform commutes with both $\Pi$ and $\Pi^*$. Regarding the proof of the Theorems \ref{thm:solth_frac_elast_D} and \ref{thm:solth_frac_elast_N}, we need to inspect the expression
\[
   \left(\Pi(C+z^{-\alpha}D)\Pi^*\right)^{-1}
\]
more closely for suitable complex numbers $z$. We compute 
\[
    \left(\Pi(C+z^{-\alpha}D)\Pi^*\right)^{-1}=  \left(\Pi C\Pi^*+z^{-\alpha}\Pi D\Pi^*\right)^{-1}
\]
 Now, both $\tilde C\coloneqq \Pi C\Pi^*$ and $\tilde D\coloneqq \Pi D\Pi^*$ are continuous linear operators from $R(\Grad)$ to $R(\Grad)$. Moreover, from ${\tilde D}^* = \Pi D^*\Pi^*=\Pi D\Pi^* =\tilde D$ and 
\[
   \langle \tilde D \Phi,\Phi\rangle = \langle \Pi D \Pi^*\Phi,\Phi\rangle = \langle D \Pi^*\Phi,\Pi ^*\Phi\rangle\geqq c\abs{\Pi^*\Phi}=c\abs{\Phi}_{R(\Grad)}   \quad(\Phi \in R(\Grad))
\]
we realize that $\tilde D$ is self-adjoint and strictly positive definite in $R(\Grad)$ with $c$ as a possible positive definiteness constant. Hence, literally the same proof for the Theorems \ref{thm:solth_frac_elast_D} and \ref{thm:solth_frac_elast_N} applies here with $D$ and $C$ respectively replaced by $\tilde D$ and $\tilde C$.
\end{proof}

Having proved the well-posedness also for the projected system, we state the interconnection between the solution obtained in Theorem \ref{thm:solth_frac_elast_N} and the one in Theorem \ref{thm:solth_mod}.

\begin{thm}\label{thm:equivalence_thm} Let $\Omega\subseteqq \R^n$ be an open set and such that $R(\Grad)\subseteqq H_{\textnormal{sym}}(\Omega)$ is closed, $\alpha\in [1/2,1]$, $K>0$. We denote by $\Pi\colon H_{\textnormal{sym}}(\Omega)\to R(\Grad)$ the orthogonal projector onto $R(\Grad)$. Let $\mu \in L\left(L_2(\Omega)^n\right), C,D \in L\left( H_{\textnormal{sym}}(\Omega)\right)$. We assume $\mu,D$ to be selfadjoint and that there exists $c>0$ such that $\mu\geqq c$ and $D\geqq c$. Let $\nu_0$ be the maximum of the $\nu_0$'s occurring in Theorem \ref{thm:solth_frac_elast_N} and Theorem \ref{thm:solth_mod} and let $\nu\geqq \nu_0$ and $F\in H_{\nu,0}(\R;L_2(\Omega)^n)$.  Let $(v,T)\in H_{\nu,0}\left(\R;L_2(\Omega)^n\oplus H_{\textnormal{sym}}(\Omega)\right)$ solve
\begin{equation}\label{eq:unmod}
   \left( \partial_0\begin{pmatrix} \mu & 0  \\ 0 & (C+D\partial_0^\alpha)^{-1} 
                     \end{pmatrix}
- \begin{pmatrix} 0 & \Diverg_0 \\ \Grad & 0 
                     \end{pmatrix}\right)\begin{pmatrix} v \\ T 
\end{pmatrix} = \begin{pmatrix} F \\ 0
\end{pmatrix}
\end{equation}
 and let $(\tilde v,\tilde T)\in H_{\nu,0}\left(\R;L_2(\Omega)^n\oplus R(\Grad)\right)$ solve
\begin{equation}\label{eq:mod}
    \left( \partial_0\begin{pmatrix} \mu & 0  \\ 0 & \left(\Pi(C+D\partial_0^\alpha)\Pi^*\right)^{-1} 
                     \end{pmatrix}
- \begin{pmatrix} 0 & \Diverg_0\Pi^* \\ \Pi\Grad & 0 
                     \end{pmatrix}\right)\begin{pmatrix} \tilde v \\ \tilde T 
\end{pmatrix} = \begin{pmatrix} F \\ 0
\end{pmatrix}.
\end{equation}
Then $(\tilde v,\tilde T)=(v,\Pi T)$.
\end{thm}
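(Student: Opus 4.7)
The plan is to show that the pair $(v,\Pi T)$ satisfies the modified system \eqref{eq:mod} and then invoke the uniqueness part of Theorem \ref{thm:solth_mod}. Two structural facts about $\Pi$ will be used repeatedly: first, $\Pi\Pi^{*}=I_{R(\Grad)}$ while $\Pi^{*}\Pi$ is the orthogonal projector in $H_{\textnormal{sym}}(\Omega)$ onto $R(\Grad)$; second, since $\Diverg_{0}=-\Grad^{*}$, we have $\ker(\Diverg_{0})=R(\Grad)^{\perp}$, i.e.\ $I-\Pi^{*}\Pi$ projects onto $\ker(\Diverg_{0})$. Moreover, as $\Pi$ is a bounded (time-independent) operator on $H_{\textnormal{sym}}(\Omega)$, it lifts to a bounded operator on $H_{\nu,0}(\R;H_{\textnormal{sym}}(\Omega))$ which commutes with $\partial_{0}$ and with its entire functional calculus.

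I would first treat the second row of \eqref{eq:unmod}, which reads $\partial_{0}(C+D\partial_{0}^{\alpha})^{-1}T=\Grad v$. Setting $S\coloneqq (C+D\partial_{0}^{\alpha})^{-1}T$, this gives $\partial_{0}S=\Grad v\in H_{\nu,0}(\R;R(\Grad))$, and since $\partial_{0}$ is a unitary bijection from $H_{\nu,0}(\R;R(\Grad))$ onto itself (because $R(\Grad)$ is closed and $\partial_{0}$ acts only in time), we obtain $S\in H_{\nu,0}(\R;R(\Grad))$; equivalently, $\Pi^{*}\Pi S=S$. Consequently $T=(C+D\partial_{0}^{\alpha})\Pi^{*}\Pi S$, and applying $\Pi$ yields $\Pi T=\Pi(C+D\partial_{0}^{\alpha})\Pi^{*}\,\Pi S$, hence $\Pi S=(\Pi(C+D\partial_{0}^{\alpha})\Pi^{*})^{-1}\Pi T$ (the inverse exists by Theorem \ref{thm:solth_mod}). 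Applying $\Pi$ to $\partial_{0}S=\Grad v$ and using that $\Pi$ commutes with $\partial_{0}$ then produces the second row of \eqref{eq:mod} with $(\tilde v,\tilde T)=(v,\Pi T)$.

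For the first row, I would verify $\Diverg_{0}T=\Diverg_{0}\Pi^{*}\Pi T$, i.e.\ $\Diverg_{0}(I-\Pi^{*}\Pi)T=0$. Since $(I-\Pi^{*}\Pi)T$ takes values in $\ker(\Diverg_{0})$ pointwise a.e.\ in time, and since $\Diverg_{0}$ extends (in the sense of the extrapolation scale recalled in Section \ref{sec:set}) to an operator that still vanishes on its kernel, this identity holds in $H_{\nu,0}(\R;L_{2}(\Omega)^{n})$. Plugging this into the first row of \eqref{eq:unmod} gives $\partial_{0}\mu v-\Diverg_{0}\Pi^{*}(\Pi T)=F$, which is the first row of \eqref{eq:mod} with $(\tilde v,\tilde T)=(v,\Pi T)$. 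Uniqueness of the solution of \eqref{eq:mod} then forces $(\tilde v,\tilde T)=(v,\Pi T)$.

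The step I expect to require most care is the justification that $\Diverg_{0}$ annihilates $(I-\Pi^{*}\Pi)T$ when $T$ itself is only known to lie in $H_{\nu,0}(\R;H_{\textnormal{sym}}(\Omega))$ (and not in the domain of $\Diverg_{0}$ pointwise); the trick is that $(I-\Pi^{*}\Pi)T$ is valued in the closed subspace $\ker(\Diverg_{0})\subseteq H_{\textnormal{sym}}(\Omega)$ itself, on which the closed extension of $\Diverg_{0}$ vanishes identically, so no genuine extrapolation argument beyond what is supplied in Section \ref{sec:set} is needed. Everything else is essentially bookkeeping with the orthogonal decomposition $H_{\textnormal{sym}}(\Omega)=R(\Grad)\oplus\ker(\Diverg_{0})$ and the fact that $\Pi$ commutes with $\partial_{0}$ and with the analytic functional calculus built from $\partial_{0}^{-1}$.
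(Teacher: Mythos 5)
Your proposal is correct and follows essentially the same route as the paper: show that $(v,\Pi T)$ solves the projected system via the identities $\Pi^{*}\Pi\Grad=\Grad$ and $\Diverg_{0}\Pi^{*}\Pi=\Diverg_{0}$, then invoke the uniqueness from Theorem \ref{thm:solth_mod}. The only difference is one of bookkeeping: the paper applies $\partial_{0}^{-1}$ to each line first so that $\Grad$ and $\Diverg_{0}$ act on functions lying pointwise in their domains (e.g.\ it infers $S=\Grad\partial_{0}^{-1}v$ with $\partial_{0}^{-1}v$ valued in $D(\Grad)$, rather than asserting $\Grad v\in H_{\nu,0}(\R;R(\Grad))$, which only holds in the extrapolated sense), whereas you argue directly with the continuous extensions; both justifications of the key step $\Diverg_{0}(I-\Pi^{*}\Pi)T=0$ are sound.
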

\begin{proof} The second line of \eqref{eq:unmod} reads
 \[
  \Grad v=\partial_0(C+D\partial_0^\alpha)^{-1} T \in H_{\nu,-1}\left(\R;H_{\textnormal{sym}}(\Omega)\right)\cap H_{\nu,0}(\R;H_{-1}(\abs{\Diverg_0})).
 \]
Now, since $(C+D\partial_0^\alpha)^{-1}T\in H_{\nu,0}(\R;H_{\textnormal{sym}}(\Omega))$, we deduce that $\partial_0^{-1}v$ takes values in the domain of $\Grad$. Thus, we get that
 \[
  \Grad \partial_0^{-1} v=(C+D\partial_0^\alpha)^{-1} T \in H_{\nu,0}\left(\R;H_{\textnormal{sym}}(\Omega)\right).
 \] 
and after multiplying by $(C+D\partial_0^\alpha)$ and by $\Pi$ and using that $\Pi^*\Pi \Grad=\Grad$ we arrive at
 \[
  \Pi(C+D\partial_0^\alpha)\Pi^*\Pi\Grad \partial_0^{-1} v= \Pi T \in H_{\nu,-1}\left(\R;R(\Grad)\right).
 \]
Using that $\nu_0$ is chosen large enough, we get that
  \[
  \Pi\Grad \partial_0^{-1} v= \left(\Pi(C+D\partial_0^\alpha)\Pi^*\right)^{-1}\Pi T \in H_{\nu,-1}\left(\R;R(\Grad)\right).
 \]
Hence,
 \[
  \Pi\Grad v=\partial_0 \left(\Pi(C+D\partial_0^\alpha)\Pi^*\right)^{-1}\Pi T \in H_{\nu,-2}\left(\R;R(\Grad)\right)\cap H_{\nu,-1}\left(\R;H_{-1}(\abs{-\Diverg_0\Pi^*}\right).
 \]
The first line of the system \eqref{eq:unmod} reads as
\[
  -\Diverg_0 T=-\partial_0 \mu v+f \in H_{\nu,-1}(\R;L_2(\Omega)^n)\cap H_{\nu,0}(\R;H_{-1}(\abs{\Grad}))
\]
Integrating the latter equation with respect to time gives
\[
 \mu v - \Diverg_0 \partial_0^{-1}T = \partial_0^{-1} f.
\]
Observe that $\Diverg_0\Pi^*\Pi=\Diverg_0$ and, thus, $\Diverg_0 \Pi^*\Pi \partial_0^{-1}=\Diverg_0 \Pi^*\partial_0^{-1}\Pi$. Hence, 
\begin{align*}
  \partial_0^{-1} f&= \mu v- \Diverg_0 \partial_0^{-1}T \\
                   &=\mu v- \Diverg_0 \Pi^*\Pi \partial_0^{-1}T \\
                   &= \mu v- \Diverg_0 \Pi^*\partial_0^{-1} \Pi T \in H_{\nu,0}(\R;L_2(\Omega)^n).
\end{align*}
Therefore, 
\[
  f= \partial_0 \mu v- \Diverg_0 \Pi^* \Pi T \in H_{\nu,-1}(\R;L_2(\Omega)^n)
\]
We conclude that $(v,\Pi T)\in H_{\nu,0}\left(\R;L_2(\Omega)^n\oplus R(\Grad)\right)$ satisfies the system \eqref{eq:mod} as an equation in $H_{\nu,-2}(\R;L_2(\Omega)^n\oplus R(\Grad))$. Now, since the solution for \eqref{eq:mod} is unique by Theorem \ref{thm:solth_mod} (also use Remark \ref{rem:solth}(a)) it coincides with $(\tilde v,\tilde T)$.
\end{proof}}

\begin{rem}\label{ini:data}
 Recall the system \eqref{eq:th-el_notred}. We shall assume homogeneous Dirichlet boundary conditions, but certain initial conditions $(v_0,T_0)\in D(\Diverg)\oplus D(\Grad_0)$. Consider the equation
\[
  \partial_0\begin{pmatrix} \mu & 0  \\ 0 & (C+D\partial_0^\alpha)^{-1} 
                     \end{pmatrix}\left(\begin{pmatrix} v \\ T 
\end{pmatrix}-\chi_{\R_{>0}}\begin{pmatrix} v_0 \\ T_0 
\end{pmatrix}\right)
- \begin{pmatrix} 0 & \Diverg \\ \Grad_0 & 0 
                     \end{pmatrix}\begin{pmatrix} v \\ T 
\end{pmatrix} = \begin{pmatrix} F \\ 0 
\end{pmatrix}.
\]
If we assume that $F$ has support only on the positive reals, then the latter system coincides with \eqref{eq:th-el_notred} on the positive reals. Introducing the new variables $\tilde v \coloneqq v-\chi_{\R_{>0}}v_0$ and $\tilde T \coloneqq T-\chi_{\R_{>0}}T_0$ then we arrive at the following system
\[
 \left( \partial_0\begin{pmatrix} \mu & 0  \\ 0 & (C+D\partial_0^\alpha)^{-1} 
                     \end{pmatrix}
- \begin{pmatrix} 0 & \Diverg \\ \Grad_0 & 0 
                     \end{pmatrix}\right)\begin{pmatrix} \tilde v \\ \tilde T 
\end{pmatrix} = \begin{pmatrix} F \\ 0 
\end{pmatrix}+ \begin{pmatrix} 0 & \Diverg \\ \Grad_0 & 0 
                     \end{pmatrix}\chi_{\R_{>0}}\begin{pmatrix} v_0 \\ T_0 
\end{pmatrix}.
\]
The latter system admits, by our general solution theory, a unique solution with support on the positive reals due to causality. Moreover, a standard regularity argument ensures that indeed $(\tilde v,\tilde T)(t) \to 0$, i.e., $(v,T)(t)\to (v_0,T_0)$ as $t\to0+$ in a suitable sense, for details we refer to \cite[Theorem 6.2.10]{Picard}.
\end{rem}

The main observation in the last remark is that non-homogeneous initial conditions result in a different right-hand side. A similar effect occurs when considering non-homogeneous boundary conditions. We sketch the non-homogeneous Neumann boundary data case. Details can be found in \cite{Trostorff2012,Picard}. 

\begin{rem}\label{bdy:data} We impose the Neumann data $T_\partial$. In order to avoid additional (explicit) regularity assumptions on $\Omega$, we assume that the boundary data is given as a function $T_\partial \in H_{\nu,1}(\R;H_1(\abs{\Diverg}))$. Now the non-homogeneous boundary value problem associated with \eqref{eq:th-el_notred} reads as
\[
  \begin{cases}
     \partial_0 \mu v - \Diverg T = F \\
     \partial_0 T = (C+D\partial_0^\alpha)\Grad v\\
      T-T_\partial \in H_{\nu,0}(\R;H_1(\abs{\Diverg_0})).
  \end{cases}
\]
Introducing the new variable $\tilde T \coloneqq T-T_\partial$, we arrive at
\[
  \begin{cases}
     \partial_0 \mu v - \Diverg_0 \tilde T = F + \Diverg T_\partial \\
     \partial_0 \tilde T = (C+D\partial_0^\alpha)\Grad v-\partial_0 T_\partial.
  \end{cases}
\]
This gives
\begin{multline*}
  \left( \partial_0\begin{pmatrix} \mu & 0  \\ 0 & (C+D\partial_0^\alpha)^{-1} 
                     \end{pmatrix}
- \begin{pmatrix} 0 & \Diverg_0 \\ \Grad & 0 
                     \end{pmatrix}\right)\begin{pmatrix} v \\ \tilde T 
\end{pmatrix} \\ = \begin{pmatrix} F \\ 0 
\end{pmatrix} - \partial_0\begin{pmatrix} \mu & 0  \\ 0 & (C+D\partial_0^\alpha)^{-1} 
                     \end{pmatrix} \begin{pmatrix} 0 \\ -T_\partial
 \end{pmatrix} - \begin{pmatrix} 0 & \Diverg \\ \Grad & 0 
                     \end{pmatrix}  \begin{pmatrix} 0 \\ -T_\partial
 \end{pmatrix}.
\end{multline*}
\end{rem}

In summary, non-homogeneous initial or boundary data lead to different right-hand sides in the system. We end this section with an example for non-local spatial operators as coefficients. For simplicity, we consider the case of one spatial dimension, although it will be obvious how to extend the example to higher dimensions.

\begin{example}\label{non-local-in}
 We consider the case of homogeneous Dirichlet boundary conditions of a (visco-)elastic beam. In one spatial dimension ($\Omega\subseteqq \R$), the strain tensor is given by $\eps(u)=\partial_1u$. We denote the corresponding (symmetrized) gradient with Dirichlet boundary conditions by $\partial_{1,0}$. The equations read
\[
   \begin{cases} \mu \partial_0^2 u - \partial_1 T = f, \\
                  T = C\partial_{1,0} u+ D\partial_{1,0}\partial_0^\alpha u.
   \end{cases}
\]
 In order to have an example of non-local-in-space operators, note that the one-dimensional Dirichlet Laplacian $\Delta_D \coloneqq -\partial_{1,0}^*\partial_{1,0}$ is strictly negative and is clearly selfadjoint. The negative square root of $\Delta_D$ is selfadjoint and bounded (at least if $\Omega$ is assumed to be bounded). Hence, a possible model could be
\[
   \begin{cases} \mu \partial_0^2 u - \partial_1 T = f, \\
                  T = (-\Delta_D)^{-\beta}\partial_{1,0} u+ D\partial_{1,0}\partial_0^\alpha u.
   \end{cases}
\]for some $\beta>0$ and suitable $f$.
\end{example}

\section{A homogenization theorem for fractional elasticity}

Before we give the main contribution of the present paper we recall the notion of $G$-convergence:

\begin{Def}[$G$-convergence, {\cite[p.\ 74]{Gcon1}, \cite{Waurick2012aDIE}}] Let $H$ be a Hilbert space. Let $(A_n:D(A_n)\subseteqq H\to H)_n$ be a 
sequence of one-to-one mappings onto $H$ and let $B:D(B) \subseteqq H\to H$ be one-to-one.
We say that $(A_n)_n$ \emph{$G$-converges to $B$} if for all $f\in H$ the sequence $(A_n^{-1}(f))_n$ 
converges weakly to some $u$, which satisfies $u\in D(B)$ and $B(u)=f$. $B$ is called a \emph{$G$-limit} 
of $(A_n)_n$. We say that $(A_n)_n$ \emph{strongly $G$-converges to $B$, $A_n \stackrel{s-G}{\longrightarrow} B$} in $H$, if for all weakly 
converging sequences $(f_n)_n$ in $H$, we have $(A_n^{-1}(f_n))_n$ weakly converges to some $u$, which satisfies $u\in D(B)$ 
and $B(u)=\textnormal{w-}\lim_{n\to\infty}f_n$.
\end{Def}

Our main homogenization theorem in fractional elasticity reads as follows.
We will formulate both the Neumann case as well as the Dirichlet case within one theorem. Let 
\[
   A \in \left \{\begin{pmatrix} 0 & \Diverg_0 \\ \Grad & 0\end{pmatrix}, \begin{pmatrix} 0 & \Diverg \\ \Grad_0 & 0 \end{pmatrix} \right\}.
\]
Denote by $\Pi_A$ the orthogonal projection from $H_{\textnormal{sym}}(\Omega)$ onto\footnote{In the following application it is true that $R(\Grad)\subseteqq H_{\textnormal{sym}}(\Omega)$ or $R(\Grad_0)\subseteqq H_{\textnormal{sym}}(\Omega)$ is closed.} $\overline{R(\Grad)}$ if $A$ corresponds to the Neumann case or onto $\overline{R(\Grad_0)}$ if $A$ corresponds to the Dirichlet case. We define the reduced operator $A_{\textnormal{red}} \coloneqq \left(\begin{smallmatrix} 1 & 0 \\ 0 & \Pi_A \end{smallmatrix}\right)A \left(\begin{smallmatrix} 1 & 0 \\ 0 & \Pi_A^* \end{smallmatrix}\right)$. Recall that in { Theorem \ref{thm:solth_mod}} we stated a solution theory for the corresponding reduced systems of fractional elasticity.  For Hilbert spaces $H,H_1$ we write $H\hookrightarrow\hookrightarrow H_1$ if $H$ is compactly embedded in $H_1$.


\begin{thm}\label{thm:homparticular} Let $\Omega\subseteqq \R^n$ be open. Assume that either $H_1(\abs{\Grad})\hookrightarrow\hookrightarrow H_0(\abs{\Grad})$ for the Neumann case or $H_1(\abs{\Grad_0})\hookrightarrow\hookrightarrow H_0(\abs{\Grad_0})$ for the Dirichlet case. Let $(C_n)_n,(D_n)_n$ in $L(H_{\textnormal{sym}}(\Omega))$ and $(\mu_n)_n$ in $L(L_2(\Omega)^n)$ be bounded sequences. Assume there exists $c>0$ such that for all $n\in \N$ we have that $\mu_n$, $D_n$ are selfadjoint and
\[
  D_n \geqq c \text{ and } \mu_n\geqq c.
\]
Then there exists $\nu_0>0$ such that for all $\nu\geqq \nu_0$ we have a subsequence $(n_k)_k$ of $(n)_n$ and $\mu\in L(L_2(\Omega)^n)$, $D$, $(K_\ell)_\ell$ in $L(R(\Pi_A))$ such that 
\begin{multline*}
  \left( \partial_0\begin{pmatrix} \mu_{n_k} & 0  \\ 0 & (\Pi_A(C_{n_k}+D_{n_k}\partial_0^\alpha)\Pi_A^*)^{-1} 
                     \end{pmatrix}
- A_\textnormal{red}\right) \\ \stackrel{s-G}{\longrightarrow} \left( \partial_0\begin{pmatrix} \mu & 0  \\ 0 & \partial_0^{-\alpha}D^{-1}+\partial_0^{-\alpha}\sum_{\ell=1}^\infty (-\partial_0^{-\alpha})^\ell K_\ell \end{pmatrix}
- A_\textnormal{red}\right) 
\end{multline*}
in $\chi_{(a,\infty)}(m_0)[H_{\nu,0}(\R;H_0(A_{\textnormal{red}}))]$ for all $a\in\R$.
\end{thm}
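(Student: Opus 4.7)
Write $\tilde C_n \coloneqq \Pi_A C_n \Pi_A^*$ and $\tilde D_n \coloneqq \Pi_A D_n \Pi_A^*$. As in the proof of Theorem \ref{thm:solth_mod}, $\tilde D_n$ is self-adjoint with $\tilde D_n \geqq c$ on $R(\Pi_A)$ uniformly in $n$, and $\Abs{\tilde C_n}, \Abs{\tilde D_n}$ are uniformly bounded. Using $\Abs{\partial_0^{-\alpha}}\leqq \nu^{-\alpha}$, for $\nu_0$ chosen sufficiently large (uniform in $n$ by Remark \ref{rem:dependence-of-rho}) the Neumann expansion
\[
(\tilde C_n + \partial_0^\alpha \tilde D_n)^{-1} = \partial_0^{-\alpha} \tilde D_n^{-1} + \partial_0^{-\alpha}\sum_{\ell=1}^\infty (-\partial_0^{-\alpha})^\ell (\tilde D_n^{-1}\tilde C_n)^\ell \tilde D_n^{-1}
\]
converges in operator norm on $H_{\nu,0}(\R;R(\Pi_A))$ with a tail bound independent of $n$.

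\textbf{Extraction and construction of the limit material law.} The sequences $(\mu_n)_n$, $(\tilde D_n^{-1})_n$ and, for each $\ell \in \N$, $\bigl((\tilde D_n^{-1}\tilde C_n)^\ell \tilde D_n^{-1}\bigr)_n$ are uniformly bounded. Since the underlying Hilbert spaces are separable, bounded subsets of the corresponding operator spaces are weak-operator sequentially compact; a standard diagonal argument produces a single subsequence $(n_k)_k$ along which $\mu_{n_k} \rightharpoonup \mu$, $\tilde D_{n_k}^{-1} \rightharpoonup D^{-1}$ and $(\tilde D_{n_k}^{-1}\tilde C_{n_k})^\ell \tilde D_{n_k}^{-1} \rightharpoonup K_\ell$ (for $\ell\geqq 1$) in the weak operator topology. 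Self-adjointness and lower bounds of the form $T \geqq c'$ pass to weak-operator limits, so $\mu$ and $D^{-1}$ are self-adjoint with the required positivity; hence $D \coloneqq (D^{-1})^{-1}$ is bounded, self-adjoint and strictly positive. The uniform tail bound transfers to the limit series
\[
M_{\textnormal{hom}}^{(2,2)}(\partial_0^{-1}) \coloneqq \partial_0^{-\alpha} D^{-1} + \partial_0^{-\alpha}\sum_{\ell=1}^\infty (-\partial_0^{-\alpha})^\ell K_\ell,
\]
and the positivity condition \eqref{cmat} is inherited by taking weak limits of the pre-limit inequalities, so Theorem \ref{thm:solth_mod} applies to the homogenized system.

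\textbf{Reduction to a weak-strong principle; compactness of the velocity.} Fix $a \in \R$ and let $f_{n_k} \rightharpoonup f$ in $\chi_{(a,\infty)}(m_0)[H_{\nu,0}(\R;H_0(A_{\textnormal{red}}))]$; let $(v_{n_k},\tilde T_{n_k})$ be the corresponding solutions, uniformly bounded by Theorem \ref{thm:solth_mod}, and extract a sub-subsequence with weak limit $(v,\tilde T)$. The second equation of the reduced system bounds $\Pi_A\Grad v_{n_k}$ in $H_{\nu,0}$, hence $v_{n_k}$ in $H_{\nu,0}(\R;H_1(\abs{\Grad}))$ (respectively the Dirichlet variant); the first equation together with $\Abs{\mu_{n_k}^{-1}}\leqq 1/c$ bounds $v_{n_k}$ in $H_{\nu,1}(\R;L_2(\Omega)^n)$. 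A compactness argument in the exponentially weighted $L_2$-setting, using the compact embedding hypothesis, then upgrades $v_{n_k}\rightharpoonup v$ to strong convergence in $H_{\nu,0}(\R;L_2(\Omega)^n)$; this yields $\mu_{n_k} v_{n_k} \rightharpoonup \mu v$, and the first line passes to the limit.

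\textbf{Main obstacle: identification of the second-line limit.} The principal difficulty lies in identifying the weak limit of the second line, where one must handle products $(\tilde D_{n_k}^{-1}\tilde C_{n_k})^\ell \tilde D_{n_k}^{-1}\tilde T_{n_k}$ of a weak-operator convergent sequence acting on the merely weakly convergent $\tilde T_{n_k}$; this is precisely the point at which the strengthened weak-strong principle announced in the introduction is needed. The key is to rewrite $\tilde T_{n_k} = (\tilde C_{n_k} + \partial_0^\alpha \tilde D_{n_k})W_{n_k}$, where $W_{n_k} \coloneqq \partial_0^{-1}(\Pi_A\Grad v_{n_k} + \tilde G_{n_k})$: closedness of $\Pi_A\Grad$ combined with the strong $L_2$-convergence of $v_{n_k}$ from the previous paragraph endows $W_{n_k}$ with the compactness needed for the composition weak-operator $\circ$ strongly-convergent-vector to converge. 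The uniform $\nu^{-\alpha}$-decay of the Neumann tail permits truncation after finitely many terms with uniformly small remainder, and term-wise passage to the limit on the resulting finite sum identifies the weak limit with $\partial_0 M_{\textnormal{hom}}^{(2,2)}(\partial_0^{-1})\tilde T$. Causality (Remark \ref{rem:solth}(d)) keeps everything inside $\chi_{(a,\infty)}(m_0)[H_{\nu,0}]$, and uniqueness of the limit problem (via Theorem \ref{thm:solth_mod}) removes the dependence on the sub-subsequence, establishing strong $G$-convergence.
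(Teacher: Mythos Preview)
Your overall strategy --- Neumann-expand the material law, extract a subsequence, then identify the limit by a compactness argument on the velocity component --- is reasonable in spirit, but Steps 3 and 4 contain genuine gaps, and the paper's route is in fact different.

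\textbf{Regularity bounds in Step 3 are off by one time derivative.} From the second line of the reduced system you get
\[
\Pi_A\Grad v_{n_k} = \partial_0\bigl(\tilde C_{n_k}+\partial_0^\alpha\tilde D_{n_k}\bigr)^{-1}\tilde T_{n_k} - \tilde G_{n_k},
\]
and the right-hand side lies only in $H_{\nu,-1}(\R;R(\Pi_A))$, not $H_{\nu,0}$; likewise the first line gives $\partial_0\mu_{n_k}v_{n_k}=F_{n_k}+\Diverg_0\Pi_A^*\tilde T_{n_k}$, which is only in $H_{\nu,-1}(\R;L_2)+H_{\nu,0}(\R;H_{-1}(\abs{\Grad}))$, so you do \emph{not} obtain $v_{n_k}$ bounded in $H_{\nu,1}(\R;L_2)$. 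Consequently the Aubin--Lions type compactness you invoke is not available at the regularity level you claim. The paper handles exactly this by passing to $\partial_0^{-2}u_{n_k}$, which \emph{is} bounded in $H_{\nu,1}(\R;H_1(A_{\textnormal{red}}))$; this is the content of the first paragraph of the proof of Theorem~\ref{thm:ahr}.

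\textbf{The closedness argument in Step 4 does not deliver the needed convergence.} Even granting strong convergence $v_{n_k}\to v$ in $H_{\nu,0}(\R;L_2)$, closedness of $\Pi_A\Grad$ together with boundedness of $(\Pi_A\Grad v_{n_k})_k$ only yields \emph{weak} convergence $\Pi_A\Grad v_{n_k}\rightharpoonup\Pi_A\Grad v$, not strong convergence. Hence $W_{n_k}$ is again only weakly convergent, and you are back to a weak--times--weak product when you form $(\tilde D_{n_k}^{-1}\tilde C_{n_k})^\ell\tilde D_{n_k}^{-1}\,\tilde T_{n_k}$. No compactness has been gained.

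\textbf{How the paper proceeds instead.} The paper does not attempt to prove strong convergence of any component in $H_{\nu,0}$. It first verifies, via Lemma~\ref{Le: Nullspace_away}, that $A_{\textnormal{red}}$ has compact resolvent, and then invokes the abstract Corollary~\ref{thm:acr} (hence Theorem~\ref{thm:ahr}). The mechanism inside Theorem~\ref{thm:ahr} is the weak--strong principle Theorem~\ref{Th: weal-strong_1}: after applying $\partial_0^{-2}$, one takes the Fourier--Laplace transform, obtains \emph{pointwise} weak convergence of $\mathcal L_\nu(\partial_0^{-2}u_{n_k})(t)$ in $H_1(A_{\textnormal{red}})$ (Lemma~\ref{Le: Laplace_ptw}), upgrades this to pointwise \emph{strong} convergence in $H_0(A_{\textnormal{red}})$ via the compact embedding, and then combines it with the weak-operator convergence of $M_{n_k}(1/(it+\nu))$ pointwise in $t$. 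The Neumann expansion is used only \emph{afterwards}, to recognise the already-identified limit $M$ in the stated form, not as a vehicle for passing to the limit term by term.

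In short: your Step~1 and the diagonal extraction in Step~2 are fine (and coincide with Remark~\ref{rem:implicitly}(b)), but the compactness claims in Step~3 are not supported at the regularity you have, and the key product-limit identification in Step~4 does not follow from closedness. Either repair the regularity by working with $\partial_0^{-2}u_{n_k}$ and use a pointwise-in-Fourier argument as in Theorem~\ref{Th: weal-strong_1}, or appeal directly to Corollary~\ref{thm:acr}.
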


\begin{rem}\label{rem:implicitly} (a) The crucial fact, why we introduced $A_{\textnormal{red}}$, is that $A_{\textnormal{red}}$ has compact resolvent. In consequence, we do not need to introduce the projection $\Pi_A$ in the case of $\Omega$ being a one-dimensional, bounded interval.
 
{ (b) From the proof of Theorem \ref{thm:homparticular}, we will see that the subsequence can be chosen in a way such that $D^{-1}=\tau_{\textnormal{w}}\textnormal{-}\lim_{k\to\infty} D_{n_k}^{-1}$ and $K_{\ell}= \tau_{\textnormal{w}}\textnormal{-}\lim_{k\to\infty} D_{n_k}^{-1}\left(C_{n_k}D_{n_k}^{-1}\right)^\ell$, $\ell\in \N$. Here $\tau_{\textnormal{w}}$ denotes the weak operator topology.}

 (c) For convenience and in order not to get drowned in notation, we stated a weaker version of what we will actually prove. Theorem \ref{thm:homparticular} follows from Corollary \ref{thm:acr} being itself a consequence of Theorem \ref{thm:ahr} below. In particular, in the situation of Theorem \ref{thm:homparticular}, we will show that for the extracted subsequence $(n_k)_k$ we have the following result. For $\nu_0$ sufficiently large, let $\nu_1>\nu\geqq \nu_0$ and take a weakly convergent sequence \[
(f_k,g_k)_k \textnormal{ in }H_{\nu,0}(\R;H_0(A_{\textnormal{red}}))\cap H_{\nu_1,0}(\R;H_0(A_{\textnormal{red}}))\eqqcolon H_{\nu,0}\cap H_{\nu_1,0}(\R;H_0(A_{\textnormal{red}}))                                                                                                                                             \]
with limit $(f,g)$ satisfying the following equi-integrability condition at $-\infty$:
\[
  \sup_{k\in\N} \abs{\chi_{(-\infty,a)}(m_0)(f_k,g_k)}_{\nu_1,0}\to 0 \ (a\to-\infty).
\]
Denote \[
                              S_k\coloneqq \left( \partial_0\begin{pmatrix} \mu_{n_k} & 0  \\ 0 & (\Pi_A(C_{n_k}+D_{n_k}\partial_0^\alpha)\Pi_A^*)^{-1} 
                     \end{pmatrix}
- A_\textnormal{red}\right)^{-1} \quad (k\in\N)
                             \]
and $S$ correspondingly. Then $S_k (f_k,g_k)\rightharpoonup S(f,g)$ in $H_{\nu_1,0}(\R;H_0(A_{\textnormal{red}}))$ as $k\to\infty$. 

The last assertion is stronger than the assertion in Theorem \ref{thm:homparticular}. Indeed, Theorem \ref{thm:homparticular} states that for all weakly convergent sequences $(f_k,g_k)_k$ in $H_{\nu,0}(\R;H_0(A_{\textnormal{red}}))$ consisting of functions supported on $(a,\infty)$ for some $a\in \R$, we get that $S_k (f_k,g_k)\rightharpoonup S(f,g)$ in $H_{\nu,0}(\R;H_0(A_{\textnormal{red}}))$. In order to deduce the last convergence from the above, observe that by the condition on the support of the $f_k$'s and $g_k$'s the equi-integrability condition is trivially satisfied. Moreover, $(f_k,g_k)_k$ is weakly convergent in $H_{\nu_1,0}(\R;H_0(A_{\textnormal{red}}))$ for all $\nu_1>\nu$ also by the support condition. By the above, we infer that $S_k(f_k,g_k)\rightharpoonup S(f,g)$ in $H_{\nu_1,0}(\R;H_0(A_{\textnormal{red}}))$. In particular, this means that $\langle S_k(f_k,g_k),\phi\rangle\to \langle S(f,g),\phi\rangle$ as $k\to\infty$ for all $\phi\in C_c^\infty(\R;H_0(A_{\textnormal{red}}))$. The boundedness of $(S_k(f_k,g_k))_k$ in $H_{\nu,0}(\R;H_0(A_{\textnormal{red}}))$ yields the asserted convergence.

(d) We also note that from Theorem \ref{thm:homparticular} it follows that
   \begin{multline*}
  \left( \partial_0\begin{pmatrix} \mu_{n_k} & 0  \\ 0 & (\Pi_A(C_{n_k}+D_{n_k}\partial_0^\alpha)\Pi_A^*)^{-1} 
                     \end{pmatrix}
- A_\textnormal{red}\right) \\ \stackrel{G}{\longrightarrow} \left( \partial_0\begin{pmatrix} \mu & 0  \\ 0 & \partial_0^{-\alpha}D^{-1}+\partial_0^{-\alpha}\sum_{\ell=1}^\infty (-\partial_0^{-\alpha})^\ell K_\ell \end{pmatrix}
- A_\textnormal{red}\right) 
\end{multline*}
in $H_{\nu,0}(\R;H_0(A_{\textnormal{red}}))$. Indeed, since $C_c^\infty(\R;H_0(A_{\textnormal{red}}))$ is dense in $H_{\nu,0}(\R;H_0(A_\textnormal{red}))$, the assertion follows with a standard density argument (use that $(S_k)_k$ is bounded).

 (e) Implicitly Theorem \ref{thm:homparticular} asserts that the limiting equation is well-posed. Indeed, we will show that the norm bound of the solution operator is the same. (This follows easily from the abstract homogenization result Corollary \ref{thm:acr}) 

(f) We emphasize that even if both $(D_n)_n$ and $(D_n^{-1})_n$ converge in the weak operator topology, it is unclear whether $((\Pi_A D_n \Pi_A^*)^{-1})_n$ converges in the weak operator topology of $L(R(\Pi_A))$, see also \cite[Remark 4.6(ii)]{Waurick2012Asy}. However, it is possible to show that if $(D_n)_n$ is a multiplication operator, i.e., $D_n = D(n\cdot)$ for some periodic, bounded and measurable mapping $D\colon \R^n\to\R^{(n\times n)^2}$ as in classical homogenization theory, then $((\Pi_A D_n \Pi_A^*)^{-1})_n$ converges in the weak operator topology. The limit is well-known and can be computed by solving so-called \emph{local} problems
, cf.\ e.g.\ \cite{BenLiPap,CioDon,TarIntro}. In this way, we extend well-known homogenization results.  
\end{rem}

\begin{rem}\label{rem:Abde} In \cite{Abde2009}, a Kelvin-Voigt model is treated. The space-time operator of this equation may be reformulated as follows
\[
   \partial_0 \begin{pmatrix} \rho_n & 0 \\ 0 & (\partial_0 B_n + A_n)^{-1} \end{pmatrix}  - A_{\textnormal{red}}, 
\]
where $\rho_n$, $A_n$, $B_n$ are suitable coefficients. The coefficients $A_n$ and $B_n$ in the situation of \cite{Abde2009} may also depend on time. To get that the corresponding equation is well-posed, it is also assumed that $B_n$ is selfadjoint and strictly positive definite uniformly in $n$ as well as differentiable w.r.t.~time with bounded and measurable derivative. For $A_n$ a similar differentiability condition is assumed.  The authors of \cite{Abde2009} show that the homogenized system, i.e., the limit as $n\to\infty$, has memory effects. Note that, in the time-independent framework treated here -- as it was already observed (at least) in \cite{Franc1986} -- the occurrence of memory effects is clear. We can give a heuristic operator-theoretic explanation for the memory effect phenomenon to occur. Indeed, in order to express the limit of  $(\partial_0B_n + A_n)^{-1}$ choose $\nu$ large enough to derive a Neumann series expansion. Considering the resulting series $\sum_{k=0}^\infty (-B_n^{-1}A_n\partial_0^{-1})^{k}\partial_0^{-1}B_n^{-1}$, we may let $n$ tend to infinity in this expression (possibly by passing to a subsequence). The limit series is then of the form  $\sum_{k=0}^\infty C_k(\partial_0^{-1})^{k}\partial_0^{-1}$ for suitable $(C_k)_k$. In general the operators involved only converge in the weak operator topology. Since computing the inverse is not a continuous process in the weak operator topology, one cannot expect that the limit series is a Neumann series expression similar to the series one started out with. This fact then results in a memory effect. The authors of \cite{Abde2009,Franc1986} give also explicit formulas for the limit equation. We also refer to \cite[Example 5.7]{WaurickMMA} for a similar effect in the homogenization of ordinary differential equations. 
\end{rem}

%
%
%

\begin{rem}\label{rem:homeqns} The homogenized equations written in a similar form as the system one started out with are given in the introduction. We will consider the homogeneous Dirichlet case only. The result of Theorem \ref{thm:homparticular} roughly states the following. Let $a\in\R$ and let $(f_k)_k$ be a weakly convergent sequence in $H_{\nu,0}(\R;L_2(\Omega)^n)$ with limit $f$ and such that\footnote{We denote the support of $v\in H_{\nu,0}(\R;H)$ by $\spt v\coloneqq \R\setminus \bigcup\{ U\subseteqq \R; v|_U=0, U \text{ open}\}$.} $\inf_{k\in\N}\inf\left(\spt f_k \right)\geqq a$. Let $(u_k,T_k)_k$ in $H_{\nu,1}(\R;L_2(\Omega)^n)\oplus H_{\nu,0}(\R;R(\Grad_0))$ be the sequence of solutions of the equations
\begin{multline*}
  \begin{cases}
   \mu_{n_k}\partial_0^2 u_k(t,x) -\Diverg\Pi_A^* T_k(t,x) = f_k(t,x)\\
   \Pi_AT_k(t,x)=\Pi_AC_{n_k}\Pi_A^*\Pi_A\Grad_0 u_k(t,x)+\Pi_A\partial_0^\alpha D_{n_k}\Pi_A^*\Pi_A\Grad_0 u_k (t,x), 
  \end{cases}\\ \quad ((t,x)\in \R\times\Omega, k\in\N). 
\end{multline*}
Then $(u_k,T_k)_k$ weakly converges to the solution $(u,T)$ of the following system
\[
 \begin{cases}
   \mu\partial_0^2 u(t,x) -\Diverg\Pi_A^* T(t,x) = f(t,x)\\
   T(t,x)=\left(\partial_0^{-\alpha}D^{-1}+\partial_0^{-\alpha}\sum_{\ell=1}^\infty (-\partial_0^{-\alpha})^\ell K_\ell\right)^{-1}\Pi_A\Grad_0 u(t,x), 
  \end{cases}\quad ((t,x)\in \R\times\Omega).
\]
Another Neumann expansion of the inverse in the latter equation gives the system in the introduction. In principle, we could discuss non-homogeneous boundary value problems here. However, as they can be discussed in a unified and abstract way as in Section \ref{sec:abs_comp}, see e.g.~\cite{Trostorff2012}, we postpone the discussion to a future article. 

{ We emphasize here that our homogenization theorem only asserts something about the convergence of the part of the stress T, which can be represented as a symmetrized gradient. However, in view of Theorem \ref{thm:equivalence_thm}, we see that the homogenization theorem presented asserts the convergence of (a subsequence of) the displacement fields solving the equation for fractional elasticity.}
\end{rem}

We apply our findings to Example \ref{non-local-in}:
\begin{example}[Example \ref{non-local-in} continued]\label{non-local-2} Assume that $\Omega=(0,1)$. By the Arzela-Ascoli theorem, $H_1(\abs{\partial_{1,0}})\hookrightarrow\hookrightarrow H_0(\abs{\partial_{1,0}})$. Assume $\mu,D$ are given as $1$-periodic, bounded, measurable functions from $\R$ to $\R$. Moreover, we assume that $\mu,D\geqq c$. By \cite[Theorem 2.6]{CioDon}, we deduce that $(\mu(n\cdot))_n$ and $(D(n\cdot)^k)_n$ converge for any $k\in\N$ to the respective integral means over the period in the $\sigma(L_\infty,L_1)$-topology. Thus, we infer the convergence of the induced multiplication operators in the weak operator topology (compare with \cite[Proposition 4.3]{WaurickMMA}). Let $(u_k,T_k)_k$ be the sequence of solutions to the following problems
\[
  \begin{cases}
   \mu(k\cdot )\partial_0^2 u_k -\partial_1T_k = f_k\\
   T_k=\left(-\Delta_D\right)^{-\beta_k}\partial_{1,0} u_k+D(k\cdot)\partial_0^\alpha \partial_{1,0} u_k, 
  \end{cases} \quad (k\in\N), 
 \]where $(f_k)_k$ is a weakly convergent sequence in an appropriate space and $(\beta_k)_k$ a convergent sequence of non-negative real numbers. Denoting $f\coloneqq \textnormal{w-}\lim_{k\to\infty}f_k$, we deduce that (a subsequence of) $(u_k,T_k)_k$ weakly converges to the solution of the equation
\[
 \begin{cases}
   {\displaystyle \left(\int\limits_{[0,1]}\mu(x)\dd x\right) \partial_0^2 u -\partial_1T = f}\\{\displaystyle
   T=\left(\int\limits_{[0,1]}\frac{1}{D(x)}\dd x\right)^{-1}\partial_0^\alpha\partial_{1,0}u+\partial_0^\alpha\sum\limits_{k=1}^\infty\left(-\sum\limits_{\ell=1}^\infty (-\partial_0^{-\alpha})^\ell K_\ell\left(\int\limits_{[0,1]}\frac{1}{D(x)}\dd x\right)^{-1}\right)^{k}\partial_{1,0}u,}
   \end{cases}
\]where { $K_\ell = \lim_{k\to \infty} D(k\cdot)^{-1}\left(D(k\cdot)^{-1}(-\Delta_D)^{-\beta_k}\right)^\ell$} (for a suitable subsequence).
\end{example}

{ A combination of the results presented yields -- to the best of the author's knowledge -- the first explicit formula for the limit equation in fractional elasticity:}

{ \begin{thm}\label{thm:hom-1-dimensional} Let $\mu,D,c \in L_\infty(\R)$, $\alpha\in [1/2,1]$. Assume that $\mu,D\geqq d$ for some $d>0$ and that $\mu,D,c$ are periodic with period $1$. Then there exists $\nu_0>0$ such that for all $\nu>\nu_0$ and for all $f\in H_{\nu,0}(\R;L_2(0,1))$,  $k\in\N$ there exist uniquely determined $(u_k,T_k)\in H_{\nu,1}(\R;L_2(0,1))\oplus H_{\nu,0}(\R;L_2(0,1))$ such that\footnote{By $\psi(m_1)$ we denote the multiplication operator of multiplying with a function $\psi$ in $L_2(0,1)$.}
        \[
          \begin{cases} \mu(k m_1) \partial_0^2 u_k - \partial_1 T_k = f, \\
                  T_k = c(k m_1)\partial_{1,0} u_k+ D(k m_1)\partial_{1,0}\partial_0^\alpha u_k.
   \end{cases}
        \]
Moreover, the sequence $((u_k,T_k))_k$ is weakly convergent. The respective limit $(u,T)\in H_{\nu,1}(\R;L_2(0,1))\oplus H_{\nu,0}(\R;L_2(0,1))$ satisfies
\[
 \begin{cases}{\displaystyle
   \left(\int\limits_{[0,1]}\mu(x)\dd x\right) \partial_0^2 u -\partial_1T = f}\\ {\displaystyle
   T=\left(\int\limits_{[0,1]}\frac{1}{D(x)}\dd x\right)^{-1}\partial_0^\alpha\partial_{1,0}u} \\ {\displaystyle \quad\quad  +\partial_0^\alpha\sum\limits_{k=1}^\infty\left(-\sum\limits_{\ell=1}^\infty (-\partial_0^{-\alpha})^\ell \int\limits_{[0,1]}\left(\frac{c(x)^\ell}{D(x)^{\ell+1}}\right)\dd x\left(\int\limits_{[0,1]}\frac{1}{D(x)}\dd x\right)^{-1} \right)^{k}\partial_{1,0}u.}
   \end{cases}
\]
\end{thm}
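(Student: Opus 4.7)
The plan is to combine the well-posedness Theorem~\ref{thm:solth_frac_elast_D}, the abstract homogenization Theorem~\ref{thm:homparticular}, the equivalence Theorem~\ref{thm:equivalence_thm} (in its Dirichlet variant), and the classical weak convergence of $1$-periodic multiplication operators.

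First, I would establish well-posedness for each $k\in\N$ with a single $\nu_0>0$. Setting $\Omega=(0,1)$, $H_{\textnormal{sym}}(\Omega)=L_2(0,1)$, and identifying $\partial_{1,0},\partial_1$ with $\Grad_0,\Diverg$, the multiplication operators $\mu(km_1),c(km_1),D(km_1)$ satisfy the hypotheses of Theorem~\ref{thm:solth_frac_elast_D} with uniform bounds $\Abs{\mu}_\infty,\Abs{c}_\infty,\Abs{D}_\infty$ and uniform lower bound $d$ for $\mu,D$; Remark~\ref{rem:dependence-of-rho} then provides a common $\nu_0$ valid for every $k$.

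Next, for the homogenization step I would invoke Theorem~\ref{thm:homparticular} in its Dirichlet form; the required compact embedding $H_1(\abs{\partial_{1,0}})\hookrightarrow\hookrightarrow L_2(0,1)$ is the one-dimensional Rellich-Kondrachov theorem. Let $\Pi$ denote the orthogonal projector in $L_2(0,1)$ onto $R(\partial_{1,0})=\{f\in L_2(0,1);\int_0^1 f\dd x=0\}$. Applying Theorem~\ref{thm:homparticular} together with Remark~\ref{rem:implicitly}(b) yields a subsequence $(k_j)_j$ and operators $\mu,D,(K_\ell)_\ell$ with $\mu=\tau_{\textnormal{w}}\textnormal{-}\lim_j \mu(k_jm_1)$, $D^{-1}=\tau_{\textnormal{w}}\textnormal{-}\lim_j \Pi D(k_jm_1)^{-1}\Pi^*$, and $K_\ell=\tau_{\textnormal{w}}\textnormal{-}\lim_j\Pi D(k_jm_1)^{-1}(c(k_jm_1)D(k_jm_1)^{-1})^\ell\Pi^*$.

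To identify these weak operator limits explicitly, I would invoke \cite[Theorem 2.6]{CioDon}: for any $1$-periodic $\varphi\in L_\infty(\R)$, $\varphi(k\cdot)\rightharpoonup\int_0^1\varphi(x)\dd x$ in $\sigma(L_\infty,L_1)$, which translates into weak operator convergence of the associated multiplication operators (see \cite[Proposition 4.3]{WaurickMMA}). Applied to $\mu$, $1/D$, and $c^\ell/D^{\ell+1}$ (all in $L_\infty(\R)$ by the hypothesis $D\geq d$), and using that $T\mapsto\Pi T\Pi^*$ preserves weak operator convergence and sends a scalar multiplication operator to that scalar on $R(\Pi)$, I obtain $\mu=(\int_0^1\mu)\identity$, $D^{-1}=(\int_0^1 1/D)\identity|_{R(\Pi)}$ and $K_\ell=(\int_0^1 c^\ell/D^{\ell+1})\identity|_{R(\Pi)}$. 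Because the limiting system is uniquely determined, a subsequence-of-subsequences argument upgrades convergence along $(k_j)_j$ to convergence of the whole sequence $((u_k,T_k))_k$.

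Finally, I would pass from the projected to the full stress formulation via Theorem~\ref{thm:equivalence_thm} (Dirichlet variant), giving $(\tilde v,\tilde T)=(v,\Pi T)$ and $u=\partial_0^{-1}v$. The explicit stress-strain law in the statement is obtained by inverting the second diagonal block of the limiting $M(\partial_0^{-1})$, namely $\partial_0^{-\alpha}D^{-1}+\partial_0^{-\alpha}\sum_{\ell\geq 1}(-\partial_0^{-\alpha})^\ell K_\ell$, by a Neumann expansion convergent for $\nu$ large by the same norm estimates used in the proof of Theorem~\ref{thm:solth_frac_elast_D}, writing $D_h:=(\int_0^1 1/D)^{-1}$ and $\kappa_\ell:=\int_0^1 c^\ell/D^{\ell+1}$ to produce the double series appearing in the theorem. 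The main obstacle I expect is the interplay between the projection $\Pi$ and the Neumann inversion: weak operator limits of inverses need not equal inverses of weak operator limits, so I must rely on the fact that in the one-dimensional periodic setting all relevant unprojected limits are scalar multiples of the identity, whence projection preserves their scalar character and the inversion proceeds as if no projection were present.
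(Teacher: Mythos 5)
Your well-posedness step coincides with the paper's part 1 of the proof (Theorem \ref{thm:solth_frac_elast_D} plus Remark \ref{rem:dependence-of-rho}, then reading the first-order system line by line and setting $u_k=\partial_0^{-1}v_k$). For the homogenization step, however, you route the argument through the projected system of Theorem \ref{thm:homparticular} together with a Dirichlet variant of Theorem \ref{thm:equivalence_thm}, whereas the paper deliberately avoids the projection in this one-dimensional situation: since $\Omega=(0,1)$, the full operator $\left(\begin{smallmatrix}0&\partial_1\\\partial_{1,0}&0\end{smallmatrix}\right)$ already has compact resolvent in $L_2(0,1)^2$ by Arzela--Ascoli (cf.\ Remark \ref{rem:implicitly}(a)), so Corollary \ref{thm:acr} applies directly to the unprojected material laws $M_k(\partial_0^{-1})$. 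Their Neumann expansions involve only the multiplication operators $D(km_1)^{-1}$ and $D(km_1)^{-1}\left(c(km_1)D(km_1)^{-1}\right)^{\ell}$, whose weak operator limits are the integral means $\int_0^1 D^{-1}$ and $\int_0^1 c^{\ell}D^{-(\ell+1)}$ by \cite[Theorem 2.6]{CioDon}; whole-sequence convergence of $(M_k)_k$ in $\s H^{\infty}_{\textnormal{w}}$ then follows from Theorem \ref{thm:compactnessofmatlaw} and a subsequence argument.

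Your route has two genuine gaps. First, the Neumann expansion of the projected material law $\left(\Pi(c(km_1)+D(km_1)\partial_0^{\alpha})\Pi^*\right)^{-1}$ produces $\left(\Pi D(km_1)\Pi^*\right)^{-1}$, the inverse of the compression, not $\Pi D(km_1)^{-1}\Pi^*$, the compression of the inverse; for each fixed $k$ these are different operators because $D(k\cdot)$ is non-constant, and identifying $\tau_{\textnormal{w}}\textnormal{-}\lim_{k}\left(\Pi D(km_1)\Pi^*\right)^{-1}$ is exactly the difficulty flagged in Remark \ref{rem:implicitly}(f), requiring cell problems. Your observation that the unprojected \emph{limits} are scalar does not resolve the pre-limit discrepancy, even though in one dimension the two limits do coincide (harmonic mean); as written, the identification is unjustified. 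Second, Theorem \ref{thm:equivalence_thm} only relates the projected solution to $(v,\Pi T)$, so your argument yields weak convergence of $(u_k,\Pi T_k)_k$ only, whereas the theorem asserts convergence of $T_k$ itself in $H_{\nu,0}(\R;L_2(0,1))$; recovering the complementary (constant-in-space) component of $T_k$ would need an additional argument. Both issues evaporate once you drop the projection, which is precisely what the one-dimensional compactness permits and what the paper's proof does.
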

\begin{proof}[Proof of Theorem \ref{thm:hom-1-dimensional} -- part 1: Well-posedness]
  For the unique existence of $(u_k,T_k)$, $k\in\N$, we apply Theorem \ref{thm:solth_frac_elast_D}. In fact, the conditions $\mu,D\geqq d$ implies that the multiplication operators $\mu(km_1)$ and $D(km_1)$ are selfadjoint and strictly positive definite in $L_2(0,1)$ with positive definiteness constants independent of $k$. The boundedness of $\mu,D,c$ yields that $\mu(km_1),D(km_1),c(km_1)\in L(L_2(0,1))$ with operator norms independent of $k$. Hence, by Theorem \ref{thm:solth_frac_elast_D} together with Remark \ref{rem:dependence-of-rho} (choose $K=1$), there exists $\nu_0>0$ such that  $\nu\geqq \nu_0$ and for all $(F,G)\in H_{\nu,0}(\R;L_2(0,1)^2)$ there is a unique $(v,T)\in H_{\nu,0}(\R;L_2(0,1)^2)$ with 
\[
 \left( \partial_0\begin{pmatrix} \mu(km_1) & 0  \\ 0 & (c(km_1)+D(km_1)\partial_0^\alpha)^{-1} 
                     \end{pmatrix}
- \begin{pmatrix} 0 & \partial_1 \\ \partial_{1,0} & 0 
                     \end{pmatrix}\right)\begin{pmatrix} v \\ T 
\end{pmatrix} = \begin{pmatrix} F \\ G
\end{pmatrix}.
\]
Now, for the special choice $(F,G)=(f,0)$ for some $f\in H_{\nu,0}(\R;L_2(0,1))$, we conclude unique existence of the corresponding solutions $((v_k,T_k))_k$. Using the extrapolation spaces of $\partial_0$ and $\begin{pmatrix} 0 & \partial_1 \\ \partial_{1,0} & 0 \end{pmatrix}$, we may read the equation satisfied by $(v_k,T_k)$ line-by-line in order to get that
\[
 \partial_0 \mu(km_1) v_k - \partial_{1} T_k = f
\]
and
\[
  \partial_0(c(km_1)+D(km_1)\partial_0^\alpha)^{-1} T_k-\partial_{1,0}v_k = 0. 
\]
Thus,
\[  T_k=(c(km_1)+D(km_1)\partial_0^\alpha)\partial_{1,0}\partial_0^{-1}v_k.\]
Hence, setting $u_k\coloneqq \partial_0^{-1}v_k \in H_{\nu,1}(\R;L_2(0,1))$, we arrive at
\[
          \begin{cases} \mu(k m_1) \partial_0^2 u_k - \partial_1 T_k = f, \\
                  T_k = c(k m_1)\partial_{1,0} u_k+ D(k m_1)\partial_{1,0}\partial_0^\alpha u_k.
   \end{cases}
\]
On the other hand, performing the computations backwards, we realize that any solution $(u_k,T_k)$ of the latter equation yields a solution $(\partial_0 u_k,T_k)$. Hence, the part of uniqueness and existence in the theorem in question is proved. 
\end{proof}}

In order to prove { the Theorems \ref{thm:homparticular} and \ref{thm:hom-1-dimensional}}, we have to develop some abstract theory in homogenization. This will be done in the next section.

\section{An abstract compactness result in the theory of homogenization}\label{sec:abs_comp}

Regarding the examples already discussed in earlier work, e.g.\ in \cite{Waurick2011Diss,Picard}, we realize that in order to model homogenization theory in the abstract setting of Theorem \ref{thm:solth} one should focus on the bounded and analytic function $M$, since it encodes the properties of the underlying material. That is why we introduce the Hardy space
\[
  \s H^\infty(E;L(H)) \coloneqq \{ M \colon E\to L(H); M \text{ analytic, bounded}\},
\]
 where $E\subseteqq \C$ is an open subset. $\s H^\infty(E;L(H))$, or $\s H^\infty$ for short if $E$ and $H$ is clear from the context, becomes a Banach space if endowed with the norm $M\mapsto \Abs{M}_\infty\coloneqq \sup_{z\in E} \Abs{M(z)}$. Let $r>0$. We call elements $M\in \s H^\infty(B(r,r);L(H))$ also \emph{material laws} or \emph{constitutive relations}. If, in addition, $M$ satisfies the estimate \eqref{cmat} for some $c>0$, we call $M$ a \emph{$(c)$-material law}. Homogenization theory goes along with the study of particular weak topologies, cp.\ e.g.\  \cite{TarIntro,Well,Waurick2011Diss}. In view of applications to homogenization theory the topology induced by the norm on $\s H^\infty$ is too strong. Hence, we need to introduce a weaker one. Therefore we denote $\s H(E)\coloneqq \{ f\colon  E\to\C; f\text{ analytic}\}$ and endow this space with the \emph{compact open topology}, i.e., the topology induced by uniform convergence on compact sets. Now, let $\tau_{\textnormal{w}}$ be the topology on $\s H^\infty(E;L(H))$ induced by the mappings
\[
   \s H^\infty \ni M\mapsto \left( E\ni z\mapsto \left\langle \phi, M(z)\psi \right\rangle\right) \in \s H(E)
\]
for $\phi,\psi\in H$. We denote $\s H^\infty_{\textnormal{w}}\coloneqq \left( \s H^\infty,\tau_{\textnormal{w}}\right)$ and identify $\s H^\infty_{\textnormal{w}}$ with the underlying set if we want to emphasize the topology under consideration. A subset of $\s H^\infty_{\textnormal{w}}$ is called \emph{bounded}, if it is bounded with respect to $\Abs{\cdot}_\infty$.  The main abstract homogenization theorem proved here reads as follows. 

\begin{thm}[abstract homogenization result]\label{thm:ahr} Let $H$ be a Hilbert space, $A\colon D(A)\subseteqq H\to H$ skew-selfadjoint with compact resolvent, $\nu>0$, $r>1/(2\nu)$. Let $(M_n)_n$ be a bounded and convergent sequence in 
\[
 \s H^{\infty,c}_{\textnormal{w}}(B(r,r);L(H)) \coloneqq \{ M \in \s H^\infty_\textnormal{w}; \forall z\in B(r,r):\Re z^{-1}M(z)\geqq c\}. 
\]
Denote $M\coloneqq \lim_{n\to\infty} M_n$.
Then for all weakly convergent $(f_n)_n$ in $H_{\nu,0}\cap H_{\nu_1,0}(\R;H)$ for some $\nu>\nu_1>1/(2r)$, which satisfy
\[
  \sup_{n\in\N} \abs{\chi_{(-\infty,a]}(m_0)f_n}_{\nu,0} \to 0 \quad (a\to-\infty)
\]
the solutions $(u_n)_n$ in $H_{\nu,0}(\R;H)$ of the problems
\[
  \left(\partial_0 M_n(\partial_0^{-1})+A\right) u_n = f_n
\]
weakly converge to the solution $u$ of the problem
\[
  \left(\partial_0 M(\partial_0^{-1})+A\right) u = \textnormal{w-}\lim_{n\to\infty}f_n.
\]
\end{thm}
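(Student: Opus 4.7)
The strategy is the usual one for such compactness theorems: extract a weak limit from the bounded solutions, identify it as a solution of the limit equation, then use well-posedness of the limit equation to pass from subsequential to full convergence. By Theorem \ref{thm:solth} applied to each $M_n$ (which lies in $\mathcal{H}^{\infty,c}_{\textnormal{w}}$ by assumption), the solutions $u_n\in H_{\nu,0}(\R;H)$ are uniquely defined and satisfy the uniform estimates $\abs{u_n}_{\nu,0}\leqq (1/c)\abs{f_n}_{\nu,0}$ and $\abs{u_n}_{\nu_1,0}\leqq (1/c)\abs{f_n}_{\nu_1,0}$. As a weakly convergent sequence, $(f_n)_n$ is bounded in both $H_{\nu,0}$ and $H_{\nu_1,0}$, so $(u_n)_n$ is bounded in both spaces. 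Extract a (non-relabeled) weakly convergent subsequence with limit $u$.

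The main task is to pass to the limit in $\partial_0 M_n(\partial_0^{-1}) u_n + A u_n = f_n$. Testing against $\varphi\in C_c^\infty(\R;D(A))$, the $A$-term passes to the limit by skew-selfadjointness and weak convergence, since $\langle A\varphi,u_n\rangle \to \langle A\varphi,u\rangle$. For the material-law term, I would use the decomposition
\[
  M_n(\partial_0^{-1}) u_n = M_n(\partial_0^{-1}) u + M_n(\partial_0^{-1})(u_n - u).
\]
The convergence $M_n\to M$ in $\tau_{\textnormal{w}}$, interpreted via the functional calculus \eqref{eq:func_calc} and the unitarity of $\mathcal L_\nu$, implies $M_n(\partial_0^{-1}) u \rightharpoonup M(\partial_0^{-1}) u$ in $H_{\nu,0}(\R;H)$. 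Here the assumption $\nu_1>1/(2r)$ together with $u\in H_{\nu_1,0}$ is essential, so that the analytic data of $M_n, M$ on the disc $B(r,r)$ actually determine their action on $u$ via the smaller parameter~$\nu_1$, and Lebesgue-dominated convergence arguments on the Fourier-Laplace side apply against suitable test functions.

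The crucial step is the \emph{weak-strong principle} $M_n(\partial_0^{-1})(u_n-u) \rightharpoonup 0$. From the equation, $Au_n = f_n-\partial_0 M_n(\partial_0^{-1})u_n$ is uniformly bounded in $H_{\nu,-1}(\R;H)$, so $(u_n)_n$ enjoys an additional bound in an $A$-based extrapolation space. Combined with the compact resolvent of $A$, i.e.\ the compact embedding $H_1(A)\hookrightarrow\hookrightarrow H$, an Aubin--Lions-type argument (applied in the Fourier image, where $\partial_0^{-1}$ acts as multiplication by $(im+\nu)^{-1}$) yields local relative compactness of $(u_n-u)_n$ in $H_{\nu,0}$ on bounded time windows. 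The equi-integrability hypothesis at $-\infty$ controls the tail on $(-\infty,a]$ in the $\nu_1$-norm, reducing the genuinely non-compact $\R$-situation to bounded intervals with arbitrarily small error. Composing this strong-type convergence with the uniform bound $\sup_n \Abs{M_n(\partial_0^{-1})}<\infty$ gives the claim. Gathering terms yields $(\partial_0 M(\partial_0^{-1})+A)u = \textnormal{w-}\lim f_n$, and uniqueness from Theorem \ref{thm:solth} for the limit material law $M$ upgrades subsequential to full convergence.

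\textbf{Main obstacle.} The weak-strong principle above is the heart of the matter: the time domain $\R$ is non-compact, the weights in $H_{\nu,0}$ and $H_{\nu_1,0}$ must be juggled simultaneously, and compactness has to be pieced together from the spatial compact resolvent of $A$ and the derivative bound supplied by the equation itself. The need to handle test sequences $(f_n)_n$ that are \emph{not} compactly supported in time—controlled only by equi-integrability at $-\infty$—is precisely what forces a stronger version of the principle than the one available in \cite{Waurick2012Asy}.
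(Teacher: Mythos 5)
Your high-level architecture (uniform bounds from Theorem \ref{thm:solth}, subsequence extraction, identification of the limit via a weak--strong principle exploiting the compact resolvent, then uniqueness to upgrade to full convergence) matches the paper's. But the central step, as you state it, has a genuine gap. You claim that the derivative bound supplied by the equation plus an Aubin--Lions argument yields ``local relative compactness of $(u_n-u)_n$ in $H_{\nu,0}$ on bounded time windows.'' The equation only gives $Au_n = f_n - \partial_0 M_n(\partial_0^{-1})u_n$ bounded in $H_{\nu,-1}(\R;H)$, i.e.\ $(u_n)_n$ is bounded in $H_{\nu,0}(\R;H)\cap H_{\nu,-1}(\R;H_1(A))$: the spatial gain comes at the cost of one time derivative. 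Aubin--Lions-type interpolation from these two bounds gives compactness only in intermediate spaces (equivalently, only for time-regularized quantities such as $\partial_0^{-1}u_n$ or $\partial_0^{-2}u_n$), never in the ``top corner'' $H_{\nu,0}(\R;H)$ itself. The paper's proof therefore never asserts strong compactness of $(u_n)_n$; instead it applies the weak--strong principle (Theorem \ref{Th: weal-strong_1}) to $\partial_0^{-2}u_n$, which \emph{is} weakly convergent in $H_{\nu,1}(\R;H_1(A))$, obtains $M_{n}(\partial_0^{-1})\partial_0^{-2}u_{n}\rightharpoonup M(\partial_0^{-1})\partial_0^{-2}u$ by combining pointwise weak convergence of the Fourier--Laplace transforms with the compact embedding $H_1(A)\hookrightarrow\hookrightarrow H$, and only then multiplies back by $\partial_0^{3}$ in the extrapolation scale. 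Your decomposition $M_n(\partial_0^{-1})u_n = M_n(\partial_0^{-1})u + M_n(\partial_0^{-1})(u_n-u)$ can be salvaged, but only after this time-smoothing; as written the second term is not controlled.

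A second, related gap concerns the varying right-hand sides. The pointwise evaluation of Fourier--Laplace transforms underlying the weak--strong principle requires $\inf_n\inf\spt u_n>-\infty$ (an $L_1$-type bound), which holds by causality when the right-hand side is a \emph{fixed} $f$ supported on a half-line, but not for general $f_n$ satisfying only the equi-integrability condition. This is why the paper splits $u_n=v_n+w_n$ with $(\partial_0 M_n(\partial_0^{-1})+A)v_n=f$ and $(\partial_0 M_n(\partial_0^{-1})+A)w_n=f_n-f$: stage one handles $(v_n)_n$ as above, and $(w_n)_n$ is shown to vanish via the strict positivity estimate $c\,\abs{\partial_0^{-2}w_n}^2\leqq\abs{\langle\partial_0^{-2}(f_n-f),\partial_0^{-2}w_n\rangle}$, splitting the time integral at $a$: the tail on $(-\infty,a]$ is killed uniformly by the equi-integrability hypothesis (via Lemma \ref{le:fatou}), and the part on $[a,\infty)$ by dominated convergence using the weight $e^{-2(\nu-\nu_1)t}$ and the pointwise strong convergence coming from the Sobolev embedding plus compact resolvent. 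Your proposal folds the equi-integrability into the (unjustified) compactness claim instead, so the actual mechanism by which the hypothesis and the two norms $\abs{\cdot}_{\nu,0}$, $\abs{\cdot}_{\nu_1,0}$ enter is missing.
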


We will prove Theorem \ref{thm:ahr} in Section \ref{sec:wsp}.

Note that since $(M_n)_n$ converges in $\s H_{\textnormal{w}}^{\infty,c}$ the limit $M$ is also a $(c)$-material law. Hence the limit equation is well-posed. Moreover, note that, by \cite[Proposition 1.3]{Waurick2012Asy}, which asserts that $\s H^{\infty,c}_{\textnormal{w}}\subseteqq \s H^{\infty}_{\textnormal{w}}$ is closed, convergence for $(M_n)_n$ in the subspace $\s H^{\infty,c}_{\textnormal{w}}$ or in $\s H^{\infty}_{\textnormal{w}}$ is equivalent, also see Remark \ref{rem:implicitly}(e).

What we actually use for the proof of Theorem \ref{thm:homparticular} is the following statement.

\begin{cor}[abstract compactness result]\label{thm:acr} Let $H$ be a Hilbert space, $A\colon D(A)\subseteqq H\to H$ skew-selfadjoint with compact resolvent, $\nu>0$, $r>1/(2\nu)$. Let $(M_n)_n$ be a bounded sequence in $\s H^{\infty,c}_{\textnormal{w}}(B(r,r);L(H))$. Then there exists a subsequence $(n_k)_k$ and $M\in  \s H^{\infty,c}_\textnormal{w}$ such that $M=\lim_{k\to\infty} M_{n_k}$ and for all weakly convergent $(f_k)_k$ in $H_{\nu,0}\cap H_{\nu_1,0}(\R;H)$ for some $\nu>\nu_1>1/(2r)$, which satisfy
\[
  \sup_{k\in\N} \abs{\chi_{(-\infty,a]}(m_0)f_k}_{\nu,0} \to 0 \quad (a\to-\infty)
\]
the solutions $(u_k)_k$ in $H_{\nu,0}(\R;H)$ of the problems
\[
  \left(\partial_0 M_{n_k}(\partial_0^{-1})+A\right) u_k = f_k
\]
weakly converge to the solution $u$ of the problem
\[
  \left(\partial_0 M(\partial_0^{-1})+A\right) u = \textnormal{w-}\lim_{k\to\infty}f_k.
\]
%
\end{cor}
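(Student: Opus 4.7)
The plan is to reduce Corollary \ref{thm:acr} to Theorem \ref{thm:ahr} by extracting a subsequence of $(M_n)_n$ that converges in $\s H^{\infty,c}_{\textnormal{w}}$ and then invoking Theorem \ref{thm:ahr} verbatim. The only nontrivial work is thus the sequential compactness of bounded subsets of $\s H^{\infty,c}_{\textnormal{w}}$. Note that since $A$ is skew-selfadjoint with compact resolvent, $H$ admits a countable orthonormal basis and is therefore separable.

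First, I fix a countable dense subset $(\phi_i)_i$ of $H$. For every pair $(i,j)\in \N^2$, the scalar analytic functions $z\mapsto \langle \phi_i,M_n(z)\phi_j\rangle$ form a sequence in $\s H(B(r,r))$ that is uniformly bounded by $\Abs{\phi_i}\Abs{\phi_j}\sup_{n}\Abs{M_n}_\infty$. By Montel's theorem a subsequence converges in the compact-open topology. A diagonal argument yields one subsequence $(M_{n_k})_k$ along which $\langle \phi_i, M_{n_k}(\cdot)\phi_j\rangle$ converges in $\s H(B(r,r))$ for all $i,j$. Using the uniform bound $\sup_{n}\Abs{M_n}_\infty < \infty$ together with a standard $3\epsilon$-argument, this convergence extends to all $\phi,\psi \in H$, so that for each $z\in B(r,r)$ a sesquilinear form $(\phi,\psi)\mapsto \lim_{k} \langle \phi, M_{n_k}(z)\psi\rangle$ is well-defined and bounded, representing some $M(z)\in L(H)$. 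Since compact-open limits of analytic functions are analytic, $z\mapsto \langle \phi, M(z)\psi\rangle\in \s H(B(r,r))$ for every $\phi,\psi\in H$, and one obtains $M\in \s H^\infty(B(r,r);L(H))$ with $M_{n_k}\to M$ in $\tau_{\textnormal{w}}$.

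By \cite[Proposition 1.3]{Waurick2012Asy}, $\s H^{\infty,c}_{\textnormal{w}}$ is closed in $\s H^{\infty}_{\textnormal{w}}$, hence $M\in \s H^{\infty,c}_{\textnormal{w}}$; in particular, the limit problem $(\partial_0 M(\partial_0^{-1})+A)u = \textnormal{w-}\lim_k f_k$ is well-posed by Theorem \ref{thm:solth}. With the subsequence $(M_{n_k})_k$ in hand, the hypotheses of Theorem \ref{thm:ahr} are met, and applying it yields the asserted weak convergence $u_k \rightharpoonup u$. The only delicate point in the argument is the Montel/diagonal step, but separability of $H$ (forced by the compact-resolvent assumption on $A$) together with the operator-norm bound on $(M_n)_n$ make this standard; all other work is absorbed by Theorem \ref{thm:ahr}.
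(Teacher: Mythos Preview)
Your proof is correct and follows the same overall architecture as the paper: separability of $H$ from the compact-resolvent hypothesis, then sequential compactness of bounded sets in $\s H^{\infty}_{\textnormal{w}}$, then Theorem~\ref{thm:ahr}. The only difference is in how the compactness step is carried out: the paper packages it into Theorem~\ref{thm:compactnessofmatlaw}, proved via Tikhonov's theorem on the product $\prod_{\phi,\psi} B_{\s H(E)}(\abs{\phi}\abs{\psi})$ together with metrizability in the separable case, whereas you argue directly with Montel's theorem plus a diagonal extraction over a countable dense set and a $3\epsilon$-extension. Both routes are standard; yours is perhaps the more elementary one for the sequential statement actually needed here, while the paper's Tikhonov argument yields genuine (non-sequential) compactness as a bonus and avoids having to spell out the $3\epsilon$-step and the Dunford-type passage from weak to norm analyticity of $M$.
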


Corollary \ref{thm:acr} is indeed a straightforward consequence of Theorem \ref{thm:ahr}, once the following compactness result is shown. A similar result has already been pointed out in \cite[Theorem 3.4]{WaurickMMA}. Since we assert a slightly stronger version of \cite[Theorem 3.4]{WaurickMMA}, we present a (new) short proof in this exposition.

\begin{thm}\label{thm:compactnessofmatlaw} Let $H$ be a Hilbert space, $E\subseteqq \C$ open. Then
\[
  B_{\s H^\infty} \coloneqq \{ M \in \s H^\infty(E;L(H)); \Abs{M}_\infty \leqq 1\} \subseteqq \s H^\infty_{\textnormal{w}}
\]
is compact. If, in addition, $H$ is separable then $B_{\s H^\infty}$ is metrizable and hence sequentially compact. 
\end{thm}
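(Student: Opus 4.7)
The plan is to apply Montel's theorem together with a Tychonoff-style embedding. Recall that $\tau_{\textnormal{w}}$ is by definition the initial topology on $\s H^\infty$ with respect to the scalar evaluation maps $\iota_{\phi,\psi}\colon M\mapsto \langle \phi,M(\cdot)\psi\rangle$ into the Fréchet space $\s H(E)$ (endowed with the compact-open topology). For every fixed $\phi,\psi\in H$ and $M\in B_{\s H^\infty}$ the function $\iota_{\phi,\psi}(M)$ is analytic on $E$ and bounded by $\Abs{\phi}\Abs{\psi}$, so Montel's theorem furnishes a compact set $K_{\phi,\psi}\subseteqq \s H(E)$ containing $\{\iota_{\phi,\psi}(M);M\in B_{\s H^\infty}\}$.

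Next I would consider the map
\[
  \Phi\colon B_{\s H^\infty}\to \prod_{(\phi,\psi)\in H\times H} K_{\phi,\psi},\quad M\mapsto \left(\iota_{\phi,\psi}(M)\right)_{(\phi,\psi)}.
\]
The codomain is compact by Tychonoff, and by the very definition of $\tau_{\textnormal{w}}$ the map $\Phi$ is a homeomorphism onto its image. Thus it suffices to prove that the image is closed. Given a limit point $(g_{\phi,\psi})_{(\phi,\psi)}$ of $\Phi(B_{\s H^\infty})$, for each fixed $z\in E$ the assignment $(\phi,\psi)\mapsto g_{\phi,\psi}(z)$ inherits sesquilinearity from the approximating net and satisfies $|g_{\phi,\psi}(z)|\leqq \Abs{\phi}\Abs{\psi}$; hence the Riesz representation theorem yields a unique $M(z)\in L(H)$ with $\Abs{M(z)}\leqq 1$ and $\langle\phi,M(z)\psi\rangle=g_{\phi,\psi}(z)$. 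Since each $g_{\phi,\psi}$ is analytic, $M$ is weakly analytic, and combined with the uniform norm bound this gives analyticity of $M$ as an $L(H)$-valued map (using that weak and norm analyticity agree for locally bounded operator-valued functions). Thus $M\in B_{\s H^\infty}$ and $\Phi(M)=(g_{\phi,\psi})$, proving compactness.

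For the metrizability statement, fix a countable dense sequence $(\phi_n)_n$ in $H$. I claim that on the bounded set $B_{\s H^\infty}$ the topology $\tau_{\textnormal{w}}$ is already generated by the countable subfamily $(\iota_{\phi_n,\phi_m})_{n,m\in\N}$. Indeed, for any compact $K\subseteqq E$ and any $\phi,\psi\in H$, the estimate
\[
  \sup_{z\in K}\abs{\langle \phi,(M-N)(z)\psi\rangle - \langle \phi_n,(M-N)(z)\phi_m\rangle}\leqq 2\Abs{\phi-\phi_n}\Abs{\psi}+2\Abs{\phi_n}\Abs{\psi-\phi_m}
\]
(valid for $M,N\in B_{\s H^\infty}$ because $\Abs{M-N}\leqq 2$) lets us approximate the $(\phi,\psi)$-seminorm uniformly by $(\phi_n,\phi_m)$-seminorms. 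Since the compact-open topology on $\s H(E)$ is itself metrizable, a countable product of copies of it is metrizable, so the restriction of $\tau_{\textnormal{w}}$ to $B_{\s H^\infty}$ is metrizable, and a compact metrizable space is sequentially compact.

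The main technical point I expect to be the most delicate is the closedness step: promoting the pointwise limit sesquilinear form to an element of $\s H^\infty$, that is, simultaneously verifying the norm bound (via Riesz), the analyticity of the resulting operator-valued function, and the membership in $B_{\s H^\infty}$. The crucial ingredient here is the classical equivalence of weak and operator-norm analyticity for locally bounded operator-valued functions, without which one could not upgrade the separate analyticity of each $g_{\phi,\psi}$ to analyticity of $M$ itself.
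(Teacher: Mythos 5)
Your proof is correct and follows essentially the same route as the paper: Montel's theorem plus Tychonoff for compactness of the product, Riesz--Fr\'echet to recover $M(z)$ from the limit sesquilinear forms, and the Dunford-type equivalence of weak and norm analyticity for locally bounded, weakly analytic $L(H)$-valued maps to conclude closedness of the image. The only difference is presentational (the paper writes $B_{\s H^\infty}$ directly as a closed subset of the product rather than via an embedding $\Phi$), and you additionally spell out the metrizability argument that the paper dismisses as standard.
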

\begin{proof}
 For $\eps\geqq0$ define $B_{\s H(E)}(\eps)\coloneqq \{ f \in \s H(E);\forall z\in E:\abs{f(z)}\leqq \eps\}$. By Montel's theorem $B_{\s H(E)}(\eps)\subseteqq \s H(E)$ is compact $(\eps\geqq 0)$. Observe the following equation to hold
\begin{equation}\label{eq:comp}
   B_{\s H^\infty} = \left( \prod_{\phi,\psi\in H} B_{\s H(E)}\left(\abs{\phi}\abs{\psi}\right)\right)\cap \left\{ M \colon E \to \C^{H\times H}; \forall z\in E: M(z) \text{ sesquilinear}\right\}.
\end{equation}
Indeed, ``$\subseteqq$'' follows from the fact that if $M\in B_{\s H^\infty}$ then for $\phi,\psi\in H$ the map $\langle \phi,M(\cdot)\psi\rangle$ lies in $\s H(E)$ and satisfies the estimate $ \abs{\langle \phi,M(z)\psi\rangle}\leqq \abs{\phi}\abs{\psi}$ for all $z\in E$. Moreover, $M(z)\in L(H)$ and hence $(\phi,\psi) \mapsto \langle\phi,M(z)\psi\rangle$ is sesquilinear for all $z\in E$.
The relation ``$\supseteqq$'' follows from Riesz-Frechet (any bounded sesquilinear mapping $M(z)$ on $H\times H$ is a bounded linear operator on $H$) and a Dunford-type theorem that ensures that local boundedness and weak analyticity for a norming subset of $L(H)'$ is sufficient for analyticity with values in $L(H)$, cf. \cite[p. 139]{Kato1980}.

Now, it is easy to see that \[\left\{ M \colon E \to \C^{H\times H}; \forall z\in E: M(z) \text{ sesquilinear}\right\}\cap \prod_{\phi,\psi\in H} B_{\s H(E)}\left(\abs{\phi}\abs{\psi}\right) \subseteqq \prod_{\phi,\psi\in H} B_{\s H(E)}\left(\abs{\phi}\abs{\psi}\right)\] is closed, if the product is endowed with the product topology. Hence, using the compactness of $B_{\s H(E)}$ and invoking Tikhonov's theorem, we deduce with the help of equation \eqref{eq:comp} that $B_{\s H^\infty}$ is compact as well. 

If we assume that $H$ is separable, the metrizability follows by a standard argument.
\end{proof}

\begin{proof}[Proof of Corollary \ref{thm:acr}] 
The compactness of the embedding $H_1(A)\hookrightarrow H_0(A)$ together with the fact that $D(A)$ is dense in $H_0(A)$ implies the separability of $H_0(A)$. Thus, the sequence $(M_n)_n$ in Corollary \ref{thm:acr} of material laws has a convergent subsequence in $\s H_\textnormal{w}^\infty$, by Theorem \ref{thm:compactnessofmatlaw}. For that subsequence Theorem \ref{thm:ahr} applies. 
\end{proof}

{ We come to the proof of the second part of Theorem \ref{thm:hom-1-dimensional}.
\begin{proof}[Proof of Theorem \ref{thm:hom-1-dimensional} -- part 2: The computation of the limit equation]
The Arzela-Ascoli theorem tells us that $\begin{pmatrix} 0 & \partial_1 \\ \partial_{1,0} & 0\end{pmatrix}$ has compact resolvent in $L_2(0,1)^2$. Now, consider for $k\in \N$ the operator (without loss of generality we assume that $\nu_0>0$ has been chosen so large that the Neumann series expansion is permitted)
\begin{align*}
   M_k\left(\partial_0^{-1}\right) &\coloneqq \begin{pmatrix} \mu(km_1) & 0  \\ 0 & (c(km_1)+D(km_1)\partial_0^\alpha)^{-1} 
                     \end{pmatrix} \\
          & = \begin{pmatrix} \mu(km_1) & 0  \\ 0 & D(km_1)^{-1}\partial_0^{-\alpha}\sum_{\ell=0}^\infty\left(-c(km_1)\partial_0^{-\alpha}D(km_1)^{-1}\right)^\ell  
                     \end{pmatrix}. 
\end{align*}
In particular, for $z\in B(1/(2\nu_0),1/(2\nu_0))$ we get for $k\in\N$
\begin{equation}\label{eq:matpt}
   M_k(z)= \begin{pmatrix} \mu(km_1) & 0  \\ 0 & D(km_1)^{-1}z^\alpha\sum_{\ell=0}^\infty\left(-c(km_1)z^\alpha D(km_1)^{-1}\right)^\ell  
                     \end{pmatrix}.
\end{equation}
Recall from \cite[Theorem 2.6]{CioDon} that for periodic mappings $p\in L_\infty(\R)$ the sequence $(p(n\cdot))_n$ converges to the integral mean over the period in the $\sigma(L_\infty,L_1)$-topology. Thus, the corresponding multiplication operators in $L_2$ converge in the weak operator topology. Hence, for any $z\in B(1/(2\nu_0),1/(2\nu_0))$ we deduce that $(M_k(z))_k$ converges in the weak operator topology in $L(L_2(0,1)^2)$ to the limit $M(z)$ given by
\[
 M(z)= \begin{pmatrix} \int_0^1\mu(x)\dd x & 0  \\ 0 & \int_0^1D(x)^{-1}\dd x z^\alpha+z^\alpha\sum_{\ell=1}^\infty\left(-z^\alpha\right)^{\ell}\int_0^1 \frac{c(x)^\ell}{D(x)^{\ell+1}}\dd x  
                     \end{pmatrix}.
\]
 Using that $L_2(0,1)^2$ is separable and Theorem \ref{thm:compactnessofmatlaw}, we deduce with a standard subsequence argument that $(M_k)_k$ converges to $M$ in $\s H^\infty_{\textnormal{w}}$. Note that the latter particularly implies that $M\in \s H^\infty$. Thus, by Theorem \ref{thm:acr} we get that $(v_k,T_k) \in  H_{\nu,0}(\R;L_2(0,1)^2)$ weakly converges to the solution $(v,T)\in H_{\nu,0}(\R;L_2(0,1)^2)$ of the equation
\[
   \left(\partial_0 \begin{pmatrix} \int_0^1\mu(x)\dd x & 0  \\ 0 & \int_0^1D(x)^{-1}\dd x \partial_0^{-\alpha}+\partial_0^{-\alpha}\sum_{\ell=1}^\infty\left(-\partial_0^{-\alpha}\right)^\ell\int_0^1 \frac{c(x)^\ell}{D(x)^{\ell+1}}\dd x  
                     \end{pmatrix}- \begin{pmatrix} 0 & \partial_1 \\ \partial_{1,0} & 0 
                     \end{pmatrix}\right)\begin{pmatrix} v \\ T 
\end{pmatrix} = \begin{pmatrix} f \\ 0
\end{pmatrix}.
\]
  Reading off the system line-by-line, we get that
\[
   \partial_0 \int_0^1\mu(x)\dd x v -\partial_1 T = f
\]
and
\[
 \partial_0\left(\int_0^1D(x)^{-1}\dd x \partial_0^{-\alpha}+\partial_0^{-\alpha}\sum_{\ell=1}\left(-\partial_0^{-\alpha}\right)^\ell\int_0^1 \frac{c(x)^\ell}{D(x)^{\ell+1}}\dd x  \right)T=\partial_{1,0}v.
\]
Thus, 
\[
 \left(\int_0^1D(x)^{-1}\dd x +\sum_{\ell=1}\left(-\partial_0^{-\alpha}\right)^\ell\int_0^1 \frac{c(x)^\ell}{D(x)^{\ell+1}}\dd x  \right)T=\partial_0^{\alpha}\partial_{1,0}\partial_0^{-1}v.
\]
Now, without restriction we assume that $\nu$ has been chosen large enough such that
\begin{align*}
 &\left(\int_0^1D(x)^{-1}\dd x +\sum_{\ell=1}\left(-\partial_0^{-\alpha}\right)^\ell\int_0^1 \frac{c(x)^\ell}{D(x)^{\ell+1}}\dd x  \right)^{-1} \\
 & =\left(\int_0^1D(x)^{-1}\dd x\right)^{-1}\left(1+ \left(\int_0^1D(x)^{-1}\dd x\right)^{-1}\sum_{\ell=1}\left(-\partial_0^{-\alpha}\right)^\ell\int_0^1 \frac{c(x)^\ell}{D(x)^{\ell+1}}\dd x  \right)^{-1} \\
 & =\left(\int_0^1D(x)^{-1}\dd x\right)^{-1}\sum_{k=0}^\infty \left(-\left(\int_0^1D(x)^{-1}\dd x\right)^{-1}\sum_{\ell=1}\left(-\partial_0^{-\alpha}\right)^\ell\int_0^1 \frac{c(x)^\ell}{D(x)^{\ell+1}}\dd x\right)^k
\end{align*}
exists as a Neumann series expression. Substituting $u\coloneqq\partial_0^{-1}v$ we arrive at the asserted limit expression. The proof is finished. 
\end{proof}
}

{ Before ending this section with a proof of Theorem \ref{thm:homparticular}, we recall \cite[Lemma 4.1]{Waurick2012Asy} (also see \cite[Lemma 3.2.2]{Waurick2011Diss}) needed in the proof.

\begin{lem}\label{Le: Nullspace_away} Let $H_1,H_2$ be two Hilbert spaces, $C:D(C)\subseteqq H_1\to H_2$ be a densely defined, closed linear operator. Assume that $(D(C),\abs{\cdot}_C)\hookrightarrow (H_1,\abs{\cdot}_{H_1})$ is compact. Then $(D(C^*)\cap N(C^*)^{\bot},\abs{\cdot}_{C^*}) \hookrightarrow (H_2,\abs{\cdot}_{H_2})$ is compact.
\end{lem}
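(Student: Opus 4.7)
The plan is to use the polar decomposition $C=U\abs{C}$ in order to identify $(D(C^*)\cap N(C^*)^{\bot},\abs{\cdot}_{C^*})$ isometrically with $(D(\abs{C})\cap N(C)^{\bot},\abs{\cdot}_{\abs{C}})$ and thereby reduce the claim directly to the hypothesis.

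First I would set up the decomposition: $\abs{C}\coloneqq (C^*C)^{1/2}$ is a self-adjoint non-negative operator on $H_1$, and $U$ is the partial isometry from $H_1$ to $H_2$ with initial space $\overline{R(\abs{C})}=N(C)^{\bot}$ and final space $\overline{R(C)}=N(C^*)^{\bot}$. I would then quote the standard identities for closed, densely defined operators: $D(\abs{C})=D(C)$ with $\abs{\abs{C}x}_{H_1}=\abs{Cx}_{H_2}$ for $x\in D(C)$ (so that the graph norms $\abs{\cdot}_{\abs{C}}$ and $\abs{\cdot}_C$ coincide on $D(C)$), and $C^*=\abs{C}U^*$ on $D(C^*)$.

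Next I would observe that $U^*$, being isometric from $N(C^*)^{\bot}$ onto $N(C)^{\bot}$, restricts to a linear bijection from $D(C^*)\cap N(C^*)^{\bot}$ onto $D(\abs{C})\cap N(C)^{\bot}$, with inverse the restriction of $U$. The identity $C^*y=\abs{C}U^*y$ together with the isometry property of $U^*$ imply that this bijection maps the graph norm $\abs{\cdot}_{C^*}$ onto the graph norm $\abs{\cdot}_{\abs{C}}$. By hypothesis $(D(C),\abs{\cdot}_C)\hookrightarrow H_1$ is compact, so by the identification of graph norms the same holds for $(D(\abs{C}),\abs{\cdot}_{\abs{C}})\hookrightarrow H_1$; restricting to the closed subspace $N(C)^{\bot}$ preserves compactness. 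Transporting via the two isometric bijections described above, and using that $U$ is an isometry from $N(C)^{\bot}$ into $H_2$, yields the compact embedding $(D(C^*)\cap N(C^*)^{\bot},\abs{\cdot}_{C^*})\hookrightarrow H_2$.

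The main obstacle is a careful invocation of the polar-decomposition identity $C^*=\abs{C}U^*$ in the unbounded setting; this is classical (see e.g.\ Kato, \emph{Perturbation theory}, Ch.\ VI) but needs to be cited accurately. An alternative route that avoids polar decomposition is spectral: the hypothesis is equivalent to $(1+C^*C)^{-1}$ being compact on $H_1$, and the well-known equality of the non-zero spectra (with multiplicities) of $C^*C$ and $CC^*$ then implies that $(1+CC^*|_{N(C^*)^{\bot}})^{-1}$ is compact on $N(C^*)^{\bot}$, which is in turn equivalent to the desired compact embedding.
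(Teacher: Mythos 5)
Your proposal is correct and follows essentially the same route as the paper, which proves the lemma by invoking the polar decomposition $C=U\abs{C}$ for densely defined closed operators (citing Kato); you have simply written out in full the transport of the graph norm from $(D(C^*)\cap N(C^*)^{\bot},\abs{\cdot}_{C^*})$ to $(D(\abs{C})\cap N(C)^{\bot},\abs{\cdot}_{\abs{C}})$ via the partial isometry, which is exactly the intended argument. The details you supply (the identities $D(\abs{C})=D(C)$, $\abs{\abs{C}x}=\abs{Cx}$, $C^*=\abs{C}U^*$, and the isometric bijection induced by $U^*$ between the initial and final spaces) are all accurate.
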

\begin{proof}
 The proof rests on the theorem of the polar decomposition \cite[p.334]{Kato1980} for densely defined, closed operators.
\end{proof} }
%

\begin{proof}[Proof of Theorem \ref{thm:homparticular}] 
  Since the domain of the operator $\Grad$ ($\Grad_0$, resp.) endowed with the graph norm is assumed to be compactly embedded into $L_2(\Omega)^n$, we deduce with the help of { Lemma \ref{Le: Nullspace_away}} the compactness of the embedding  $H_1(A_\textnormal{red})\hookrightarrow H_0(A_\textnormal{red})$. The boundedness of 
\[
   \left(M_n(\partial_0^{-1})\right)_n=\left( \begin{pmatrix} \mu_{n} & 0  \\ 0 & (\Pi_A(C_{n}+D_{n}\partial_0^\alpha)\Pi_A^*)^{-1} 
                     \end{pmatrix}\right)_n
\]
in $\s H^\infty_\textnormal{w}$ ensures the applicability of Corollary \ref{thm:acr}. The limit expression follows by a Neumann series expansion { similar to the one in the proof of Theorem \ref{thm:hom-1-dimensional} part 2.}
\end{proof}

\section{The proof of Theorem \ref{thm:ahr} and the weak-strong principle}\label{sec:wsp}

The method to prove Theorem \ref{thm:ahr} is similar to the one used in the proof of \cite[Theorem 3.5]{Waurick2012Asy}. In order to follow this strategy here, we have to derive a stronger version of \cite[Theorem 2.4]{Waurick2012Asy}, the so-called 'weak-strong-principle'. In \cite{Waurick2012Asy} the property of the material laws being analytic at $0$ was used. This assumption has to be weakened due to the fact that fractional \mbox{(time-)}derivatives do not correspond to a material law being analytic at $0$. The strengthened weak-strong principle result Theorem \ref{Th: weal-strong_1} does not need this assumption anymore. Consequently, the proof of Theorem \ref{Th: weal-strong_1} is completely different to the one in \cite{Waurick2012Asy}. In order to motivate Theorem \ref{Th: weal-strong_1}, we recall a fact relating convergent material laws and the convergence of the associated operators arising in the aforementioned functional calculus for $\partial_{0}^{-1}$. 

\begin{lem}[{\cite[Lemma 3.5]{WaurickMMA}}] Let $H$ be a Hilbert space, $\nu>0$, $r>\frac{1}{2\nu}$. Let $(M_n)_n$ be a bounded sequence in $\s H^\infty_{\textnormal{w}}(B(r,r);L(H))$ converging to $M\in \s H^\infty_{\textnormal{w}}$. Then $(M_n(\partial_{0}^{-1}))_n$ converges in the weak operator topology of $L(H_{\nu,0}(\R;H))$ to $M(\partial_{0}^{-1})$. 
\end{lem}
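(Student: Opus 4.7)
The plan is to translate the assertion via the Fourier-Laplace transform to the spectral side and then to exploit the uniform-on-compacta convergence built into the topology $\tau_{\textnormal{w}}$.

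First, I would invoke the functional calculus formula \eqref{eq:func_calc}: $M_n(\partial_{0}^{-1}) = \s L_\nu^* M_n(1/(im+\nu)) \s L_\nu$, where $\s L_\nu \colon H_{\nu,0}(\R;H) \to L_2(\R;H)$ is unitary. Consequently, $(M_n(\partial_{0}^{-1}))_n \to M(\partial_{0}^{-1})$ in the weak operator topology on $H_{\nu,0}(\R;H)$ if and only if $(M_n(1/(im+\nu)))_n \to M(1/(im+\nu))$ in the weak operator topology on $L_2(\R;H)$. The task thus reduces to verifying, for all $f,g \in L_2(\R;H)$,
\[
  \int_\R \langle g(t), M_n(1/(it+\nu)) f(t)\rangle_H \, dt \longrightarrow \int_\R \langle g(t), M(1/(it+\nu)) f(t)\rangle_H \, dt.
\]

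Next, using the uniform bound $C \coloneqq \sup_n \Abs{M_n}_\infty < \infty$ (which is exactly the notion of boundedness in $\s H^\infty_{\textnormal{w}}$) together with Cauchy-Schwarz, I would reduce by density to the case where $f = \chi_I \psi$ and $g = \chi_J \phi$ for bounded intervals $I,J \subseteq \R$ and vectors $\phi, \psi \in H$. The claim to verify then becomes
\[
  \int_{I \cap J} \langle \phi, M_n(1/(it+\nu)) \psi\rangle_H \, dt \longrightarrow \int_{I \cap J} \langle \phi, M(1/(it+\nu)) \psi\rangle_H \, dt.
\]

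The decisive geometric observation is that $t \mapsto 1/(it+\nu)$ parametrizes the circle $\{z \in \C : \abs{z - 1/(2\nu)} = 1/(2\nu)\}$, which is contained in $\overline{B(r,r)}$ and meets $\partial B(r,r)$ only at $z = 0$. Since $I \cap J$ is bounded, one has $\inf_{t\in I\cap J}\abs{1/(it+\nu)} > 0$, so $K \coloneqq \{1/(it+\nu) : t \in I \cap J\}$ is a compact subset of the open disc $B(r,r)$. By the very definition of $\tau_{\textnormal{w}}$, the scalar-valued maps $z \mapsto \langle \phi, M_n(z)\psi\rangle$ converge to $z\mapsto \langle \phi, M(z)\psi\rangle$ in $\s H(B(r,r))$, i.e., uniformly on compact subsets of $B(r,r)$. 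Composing with the continuous parametrization and restricting to $K$ yields uniform convergence of $t \mapsto \langle \phi, M_n(1/(it+\nu))\psi\rangle$ on the bounded set $I \cap J$; integration then finishes the argument. No essential obstacle is anticipated — the only point requiring care is keeping the density reduction uniform in $n$, which is precisely the role played by the bound $\sup_n \Abs{M_n}_\infty \leq C$.
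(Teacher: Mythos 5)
Your argument is correct and complete: conjugating by the unitary $\s L_\nu$ reduces the claim to weak operator convergence of the multiplication operators $M_n\left(\frac{1}{im+\nu}\right)$ on $L_2(\R;H)$; the uniform bound $\sup_n\Abs{M_n}_\infty<\infty$ justifies the density reduction to $f=\chi_I\psi$, $g=\chi_J\phi$; and since $t\mapsto \frac{1}{it+\nu}$ maps any compact interval into a compact subset of $\partial B\left(\frac{1}{2\nu},\frac{1}{2\nu}\right)\setminus\{0\}\subseteqq B(r,r)$, the compact-open convergence built into $\tau_{\textnormal{w}}$ gives uniform convergence of the integrands. Note that the paper does not prove this lemma itself but quotes it from \cite[Lemma 3.5]{WaurickMMA}; your spectral-side argument is the standard route and fills that citation in correctly.
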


In the light of the previous result one might wonder if $(v_n)_n$ converges weakly in $H_{\nu,0}(\R;H)$ and $(M_n)_n$ converges in $\s H^\infty_{\textnormal{w}}$ that then also 
\[
 \lim_{n\to\infty} M_n(\partial_{0}^{-1})v_n = \lim_{n\to\infty}M_n(\partial_{0}^{-1})\textnormal{w-}\lim_{n\to\infty} v_n.
\]
  Easy examples show that this equality does not hold in general. Hence, suitable assumptions are needed to deduce such an equality. Our version of the weak-strong principle reflects this fact.

\begin{thm}[weak-strong principle]\label{Th: weal-strong_1} Let $H,H_1$ be Hilbert spaces, and such that $H_1\hookrightarrow\hookrightarrow H$. Let $\nu_0>0, r>1/(2\nu_0)$ and $(v_n)_n$ be a weakly convergent sequence in $H_{\nu_0,1}(\R;H_1)$ with limit $v\in H_{\nu_0,1}(\R;H_1)$ and assume that $\inf_{n\in\N} \inf \spt v_n >-\infty$. Let $(M_n)_n$ be a bounded and convergent sequence in $\s H^\infty_\textnormal{w} (B(r,r);L(H))$ with limit $M$. Then for all $\nu>\nu_0$ and $f\in H_{\nu,-1}(\R)\cong H_{\nu,1}(\R)^*$ we have
\[
    f(M_n(\partial_{0}^{-1})v_n)\stackrel{n\to\infty}{\rightharpoonup} f(M(\partial_{0}^{-1})v)\in H.
\]
Moreover, $M_n(\partial_{0}^{-1})v_n \stackrel{n\to\infty}{\rightharpoonup} M(\partial_{0}^{-1})v \in H_{\nu,1}(\R;H)$.
\end{thm}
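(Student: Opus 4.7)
The plan is to decompose
\[
  M_n(\partial_0^{-1})v_n - M(\partial_0^{-1})v = M_n(\partial_0^{-1})(v_n - v) + \left(M_n(\partial_0^{-1}) - M(\partial_0^{-1})\right)v
\]
and to handle the two summands separately. The first should tend to zero in norm, once one upgrades the weak convergence $v_n \rightharpoonup v$ in $H_{\nu_0, 1}(\R; H_1)$ to strong convergence $v_n \to v$ in $H_{\nu, 0}(\R; H)$ for every $\nu > \nu_0$; the second will tend to zero in the weak topology of $H_{\nu, 0}(\R; H)$ by the cited lemma from \cite{WaurickMMA}, which identifies $\s H^\infty_\textnormal{w}$-convergence of the symbols with weak-operator convergence of the corresponding operators $M_n(\partial_0^{-1})$.

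The core step will be to establish this strong convergence. I would first exploit the compact embedding $H_1 \hookrightarrow\hookrightarrow H$ together with the boundedness of $(v_n)_n$ and $(\partial_0 v_n)_n$ in $H_{\nu_0, 0}(\R; H_1)$, restricted to a bounded interval $I = [a, N]$ (where $a$ is the common lower support bound). This places us in the setting of the Aubin--Lions lemma and yields precompactness of $(v_n)_n$ in $L_2(I; H)$; uniqueness of the weak limit then forces the strong limit to be $v$. To pass from this local convergence to convergence in $H_{\nu, 0}(\R; H)$ on all of $\R$, I would use the support condition: for $t \geq N$ and $\nu > \nu_0$, the weight ratio satisfies $e^{-2\nu t}/e^{-2\nu_0 t} \leq e^{-2(\nu - \nu_0)N}$, so the tail $\int_N^\infty \abs{v_n - v}_H^2 e^{-2\nu t}\,dt$ is uniformly bounded by $e^{-2(\nu-\nu_0)N}$ times a constant independent of $n$, and hence can be made arbitrarily small by choosing $N$ large.

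Once the weak $H_{\nu, 0}(\R; H)$-convergence $M_n(\partial_0^{-1}) v_n \rightharpoonup M(\partial_0^{-1}) v$ is established, the upgrade to weak convergence in $H_{\nu, 1}(\R; H)$ in the ``Moreover'' clause will follow from the commutation $\partial_0 M_n(\partial_0^{-1}) = M_n(\partial_0^{-1}) \partial_0$: this identity, combined with the same estimates applied to $\partial_0 v_n$, yields a uniform $H_{\nu, 1}(\R; H)$-bound on $(M_n(\partial_0^{-1}) v_n)_n$, and weak convergence in the Hilbert space $H_{\nu, 1}(\R; H)$ then drops out from the uniqueness of the $H_{\nu, 0}$-weak limit by a standard subsequence argument. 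The first assertion, that $f(M_n(\partial_0^{-1}) v_n) \rightharpoonup f(M(\partial_0^{-1}) v)$ in $H$ for each $f \in H_{\nu, -1}(\R)$, is then immediate by testing the weak $H_{\nu, 1}(\R; H)$-convergence against functionals of the form $f \otimes \psi^*$ for $\psi$ running through $H$.

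The main obstacle will be the Aubin--Lions step in its weighted form: verifying cleanly that the interior strong convergence, combined with the uniform-in-$n$ tail estimate coming from the common lower support bound, actually yields norm convergence on the whole of $\R$ in the weight $e^{-2\nu t}\,dt$ for each $\nu > \nu_0$, rather than merely on compact subintervals. Everything else is either a direct consequence of the bounded/commutation structure of the functional calculus for $\partial_0^{-1}$ or a standard weak-limit manipulation.
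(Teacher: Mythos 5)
Your plan is correct, but it is a genuinely different route from the paper's. The paper never upgrades $(v_n)_n$ to a strongly convergent sequence in the time domain; instead it passes to the Fourier--Laplace side, uses the support condition to show that $\s L_\nu v_n(t)\rightharpoonup \s L_\nu v(t)$ in $H_1$ for every fixed frequency $t$ (Lemma \ref{Le: Laplace_ptw}), applies the compact embedding $H_1\hookrightarrow\hookrightarrow H$ \emph{pointwise in $t$} to get strong convergence of $\s L_\nu v_n(t)$ in $H$, multiplies by the weakly-operator-convergent symbols $M_n\bigl(\tfrac{1}{it+\nu}\bigr)$, and concludes by dominated convergence plus a density argument in $H_{-1}(im+\nu)$; the ``Moreover'' clause is then obtained by a subsequence argument with point evaluations $\delta_t$. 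You instead stay in the time domain, split off $M_n(\partial_0^{-1})(v_n-v)$ and $(M_n(\partial_0^{-1})-M(\partial_0^{-1}))v$, and invoke Aubin--Lions on $[a,N]$ together with the tail bound $e^{-2(\nu-\nu_0)N}\sup_n\abs{v_n-v}_{\nu_0,0}^2$ to get \emph{norm} convergence $v_n\to v$ in $H_{\nu,0}(\R;H)$; the two summands then converge (strongly, resp.\ weakly by the quoted lemma from \cite{WaurickMMA}) for free, and your bound on $M_n(\partial_0^{-1})\partial_0 v_n$ upgrades the convergence to $H_{\nu,1}(\R;H)$ exactly as the paper's subsequence argument does. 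Both arguments use the same three ingredients (the uniform $\s H^\infty$ bound, the compact embedding, and the common lower support bound, which is what makes $\nu>\nu_0$ give the decay factor), so neither is more general; your version buys the stronger intermediate statement that $(v_n)_n$ converges in norm in $H_{\nu,0}(\R;H)$ and avoids the frequency-domain machinery at the cost of importing Aubin--Lions, whereas the paper's pointwise-in-frequency argument is self-contained and reuses Lemma \ref{Le: Laplace_ptw}, which it needs elsewhere anyway. Two small points to make explicit when you write it up: the weak limit $v$ inherits the support bound $\inf\spt v\geqq a$, so the region $(-\infty,a)$ contributes nothing; and on $[a,N]$ the weights $e^{-2\nu t}\dd t$ and $\dd t$ are equivalent, so the Aubin--Lions conclusion transfers to the weighted norm.
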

{
\begin{rem}\label{rem:seq_conv_itself}
  Note that from the assertion of Theorem \ref{Th: weal-strong_1} it also follows that $M_n(\partial_0^{-1})v_n\stackrel{n\to\infty}{\rightharpoonup} M(\partial_{0}^{-1})v \in H_{\nu_0,1}(\R;H)$. Indeed, the assumptions in Theorem \ref{Th: weal-strong_1} guarantee that $\left(M_n(\partial_0^{-1})v_n\right)_n$ is a bounded sequence in $H_{\nu_0,1}(\R;H)$. Thus, we get weak convergence with the same argument as in Remark \ref{rem:implicitly}(c). 
\end{rem}
}

The proof of Theorem \ref{Th: weal-strong_1} needs some preliminaries. We need the concept of tensor products of Hilbert spaces. As a general reference we refer to the monograph \cite{Tensor}, a short overview of the concepts needed is also given in \cite[Section 1.2.3]{Picard} or \cite[Appendix B.1 and B.2]{Waurick2011Diss}. We recall some notation. Let $H,H_1,H_2$ be Hilbert spaces. We denote the Hilbert space tensor product of $H_1$ and $H_2$ by $H_1\otimes H_2$. For any densely defined closed linear operator $A\colon D(A)\subseteqq H_1 \to H_2$ we denote the canonical extension of $A$ as a mapping from the product space $H\otimes H_1$ to $H\otimes H_2$ by $1_H\otimes A $. Similarly, write $A\otimes 1_H$ for the respective extension from $H_1\otimes H$ to $H_2\otimes H$. Note that if $H=L_2(\mu)$ for some $\sigma$-finite measure space $(\Omega,\mu)$ then $H\otimes H_1\cong L_2(\mu;H_1)$. We use (and have used) this identification throughout without further notice. A first elementary observation is the following.

\begin{lem}\label{Le: weal-strong_0} Let $H,H_1,H_2$ be Hilbert spaces and assume that $U:H_1\to H_2$ is unitary. Let $g\in H_2^*$ and denote the dual operator of $U$ by $U'\colon H_2^*\to H_1^*$. Then the following holds:
 If $(v_n)_n$ is a sequence in $H_1\otimes H$, $v\in H_1\otimes H$ and such that for all $f\in H_1^*$ we have $(f\otimes 1_H)(v_n)\rightharpoonup (f\otimes 1_H) (v) \in H$ as $n\to \infty$, then $(g\otimes 1_H)((U\otimes 1_H)(v_n)) \rightharpoonup (g\otimes 1_H) ((U\otimes 1_H)(v))\in H$ as $n\to\infty$.
\end{lem}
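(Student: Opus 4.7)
My plan is to reduce the claim to the hypothesis by a single identification of tensor product operators. The key observation is that $(g\otimes 1_H)\circ(U\otimes 1_H)$ acts on a simple tensor $x\otimes y\in H_1\otimes H$ as $g(Ux)\,y = (U'g)(x)\,y$, which is exactly the action of $(U'g)\otimes 1_H$. By linearity on the algebraic tensor product and continuity, using that $U$ is unitary (so both $(g\otimes 1_H)\circ(U\otimes 1_H)$ and $(U'g)\otimes 1_H$ are bounded operators from $H_1\otimes H$ into $H$ with norm $\leqq \Abs{g}$), this identity extends to all of $H_1\otimes H$. Thus, as operators from $H_1\otimes H$ to $H$,
\[
   (g\otimes 1_H)\circ (U\otimes 1_H) = (U'g)\otimes 1_H.
\]

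Setting $f\coloneqq U'g\in H_1^*$, the hypothesis applied to this particular $f$ then yields
\[
   (g\otimes 1_H)\bigl((U\otimes 1_H) v_n\bigr) = (f\otimes 1_H)(v_n) \rightharpoonup (f\otimes 1_H)(v) = (g\otimes 1_H)\bigl((U\otimes 1_H)v\bigr)
\]
in $H$ as $n\to\infty$, which is the asserted conclusion.

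No serious obstacle is expected here; the notation of the lemma (introducing $U'$ before it is ever used) already hints at the right reduction. The only point requiring a bit of care is to rigorously justify the identity $(g\otimes 1_H)(U\otimes 1_H)=(U'g)\otimes 1_H$ on the completed Hilbert tensor product rather than merely on the algebraic one. This, however, is a routine density and continuity argument relying on the defining universal property of $H_1\otimes H$ and the boundedness of all maps involved; see for instance \cite{Tensor} or \cite[Section~1.2.3]{Picard}.
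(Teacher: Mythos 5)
Your proof is correct and is essentially identical to the paper's one-line argument: both reduce the claim to the hypothesis via the identity $(g\otimes 1_H)\circ(U\otimes 1_H)=(U'g)\otimes 1_H$ and then apply the assumption to $f=U'g\in H_1^*$. Your additional remark on extending the identity from the algebraic to the completed tensor product by density and boundedness is a welcome, if routine, elaboration.
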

\begin{proof}
We have $(g\otimes 1_H)((U\otimes 1_H)(v_n))=(U'g\otimes 1_H)(v_n) \rightharpoonup (U'g\otimes 1_H)(v)= (g\otimes 1_H) ((U\otimes 1_H)(v))\in H$ as $n\to\infty$.
\end{proof}

The second lemma relates weak convergence in $H_{\nu,0}(\R;H)$ to pointwise (weak) convergence of the Fourier-Laplace transforms. 

\begin{lem}\label{Le: Laplace_ptw} Let $H$ be a Hilbert space, $\nu_0>0$ and let $(v_n)_n$ be a weakly convergent sequence in $H_{\nu_0,0}(\R;H)$ with limit $v$. Assume, in addition, that $s\coloneqq \inf_{n\in\N}\inf \spt v_n >-\infty$. Then for $t\in \R$ and $\nu>\nu_0$
\[
    \s L_{\nu} v_n(t) \rightharpoonup \s L_{\nu} v(t) \in H\quad (n\to\infty).
\]
Moreover, $(t\mapsto \s L_{\nu} v_n(t))_n$ is bounded in $C_b(\R;H)$, the space of bounded continuous functions from $\R$ to $H$.
\end{lem}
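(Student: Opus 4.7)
The plan is to exploit the weak convergence in $H_{\nu_0,0}(\R;H)$ by rewriting the pointwise evaluation of the Fourier-Laplace transform as testing against a carefully chosen element of $H_{\nu_0,0}(\R;H)$. The key feature that makes this work is the combination of the strict inequality $\nu>\nu_0$ with the uniform lower bound $s$ on the supports of the $v_n$'s.

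First, I would fix $t\in\R$ and $\phi\in H$ and observe that, under the support assumption, the integral
\[
 \langle \s L_\nu v_n(t),\phi\rangle_H = \frac{1}{\sqrt{2\pi}}\int_s^\infty \langle v_n(y),\phi\rangle_H\, e^{-\nu y - ity}\,\mathrm{d}y
\]
can be recast as an $H_{\nu_0,0}(\R;H)$-inner product. Concretely, set
\[
  g_{t,\phi}(y) \coloneqq \frac{1}{\sqrt{2\pi}}\,\phi\, e^{(2\nu_0-\nu)y + ity}\,\chi_{[s,\infty)}(y).
\]
Then a direct computation using $\nu>\nu_0$ shows $g_{t,\phi}\in H_{\nu_0,0}(\R;H)$ with $\Abs{g_{t,\phi}}_{H_{\nu_0,0}}^2 = \frac{|\phi|^2}{2\pi}\cdot\frac{e^{-2(\nu-\nu_0)s}}{2(\nu-\nu_0)}$, and by construction $\langle v_n,g_{t,\phi}\rangle_{H_{\nu_0,0}} = \langle \s L_\nu v_n(t),\phi\rangle_H$.

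Next I would establish that $\spt v\subseteqq [s,\infty)$: for any $g\in H_{\nu_0,0}(\R;H)$ supported in $(-\infty,s)$ we have $\langle v_n,g\rangle_{H_{\nu_0,0}}=0$, so the same holds in the limit, forcing $v$ to vanish on $(-\infty,s)$. Consequently the identity above is valid for $v$ as well, and the weak convergence $v_n\rightharpoonup v$ in $H_{\nu_0,0}(\R;H)$ applied to the test element $g_{t,\phi}$ yields
\[
 \langle \s L_\nu v_n(t),\phi\rangle_H \to \langle \s L_\nu v(t),\phi\rangle_H \quad (n\to\infty)
\]
for every $\phi\in H$, which is the asserted pointwise weak convergence.

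For the boundedness in $C_b(\R;H)$, Cauchy-Schwarz applied to the defining integral gives $|\s L_\nu v_n(t)|\leqq \frac{1}{\sqrt{2\pi}}\bigl(\int_s^\infty e^{-2(\nu-\nu_0)y}\,\mathrm{d}y\bigr)^{1/2}\Abs{v_n}_{H_{\nu_0,0}}$, a bound that is independent of $t$ and, by uniform boundedness of weakly convergent sequences, independent of $n$. Continuity of $t\mapsto \s L_\nu v_n(t)$ follows from dominated convergence, since the same Cauchy-Schwarz estimate shows that $y\mapsto v_n(y)e^{-\nu y}\chi_{[s,\infty)}(y)$ is $H$-valued $L_1$, providing a $t$-independent integrable majorant. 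The main (minor) obstacle is merely the bookkeeping of exponents needed to verify $g_{t,\phi}\in H_{\nu_0,0}(\R;H)$; here the interplay of the support condition and $\nu>\nu_0$ is exactly what renders the relevant weight integrable.
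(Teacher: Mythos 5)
Your proposal is correct and follows essentially the same route as the paper: both recast $\langle \mathcal{L}_\nu v_n(t),\phi\rangle$ as the $H_{\nu_0,0}$-inner product of $v_n$ against the test function $\frac{1}{\sqrt{2\pi}}e^{(2\nu_0-\nu)(\cdot)+it(\cdot)}\chi_{[s,\infty)}\phi$, which lies in $H_{\nu_0,0}(\R;H)$ precisely because $\nu>\nu_0$ and the supports are uniformly bounded below, and both obtain the $C_b$-bound from the uniform $L_1$-bound on $(e^{-\nu(\cdot)}v_n)_n$. Your explicit verification that $\spt v\subseteqq[s,\infty)$ is a small point of extra care that the paper leaves implicit; otherwise the arguments coincide.
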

\begin{proof}
   Note that due to the support constraint on the sequence $(v_n)_n$ the sequence $(x\mapsto e^{-\nu x}v_n(x))_n=(x\mapsto e^{-(\nu-\nu_0)x}e^{-\nu_0 x}v_n(x))_n$ is bounded in $L_1(\R;H)$. This establishes the second assertion. Furthermore, for $\phi\in H$, $n\in \N$, $t\in \R$,
\begin{align*}
   \langle \s L_{\nu} v_n(t),\phi \rangle & = \left\langle \frac{1}{\sqrt{2\pi}} \int_{\R} e^{-ixt}e^{-\nu x}v_n(x)\dd x, \phi \right\rangle \\
                                          & = \frac{1}{\sqrt{2\pi}} \int_{\R} \left\langle e^{-ixt}e^{-\nu x}v_n(x) , \phi \right\rangle \dd x \\
                                          & = \frac{1}{\sqrt{2\pi}} \int_{\R} \left\langle v_n(x) ,e^{ixt}e^{-(\nu-2\nu_0) x}\chi_{\R_{>s}}(x) \phi \right\rangle e^{-2\nu_0x} \dd x\\
                                          & = \langle v_n , \frac{1}{\sqrt{2\pi}}e^{i(\cdot)t}e^{-(\nu-2\nu_0) (\cdot)}\chi_{\R_{>s}}(\cdot) \phi\rangle_{H_{\nu_0,0}(\R;H)}\\
                                        & \tor{n}{\infty} \langle v , \frac{1}{\sqrt{2\pi}}e^{i(\cdot)t}e^{-(\nu-2\nu_0) (\cdot)}\chi_{\R_{>s}}(\cdot) \phi\rangle_{H_{\nu_0,0}(\R;H)}                                         =\langle \s L_{\nu} v(t),\phi \rangle.\qedhere
\end{align*}
\end{proof}


\begin{proof}[Proof of Theorem \ref{Th: weal-strong_1}] For the first assertion, by Lemma \ref{Le: weal-strong_0}, it suffices to prove that 
\[
    \langle g, M_n\left(\frac{1}{im+\nu}\right) \s L_\nu v_n \rangle \rightharpoonup \langle g, M\left(\frac{1}{im+\nu}\right)\s L_\nu v\rangle \in H \quad (n\to\infty) 
\]
for all $g\in H_{-1}(im +\nu)=H_{1}(im +\nu)^*$. For this note that $(\s L_{\nu} v_n)_n$ weakly converges in $H_{1}(im+\nu)\otimes H_1$ by the unitarity of $\s L_\nu$, particularly, it is a bounded sequence in $H_{1}(im+\nu)\otimes H$. The boundedness of $\left(M_n\left(\frac{1}{im+\nu}\right) \right)_n$ ensures the boundedness of $\left(M_n\left(\frac{1}{im+\nu}\right)\s L_{\nu} v_n\right)_n$ in $H_{1}(im+\nu)\otimes H$. Hence, we find a weakly convergent subsequence for which we re-use the indices $n$. It remains to identify the limit. For this, we deduce by $H_{\nu,1}(\R;H_1)\hookrightarrow H_{\nu,0}(\R;H_1)$ and Lemma \ref{Le: Laplace_ptw} that for all $t\in\R$ the sequence $(\s L_{\nu} v_n(t))_n$ converges weakly to $\s L_{\nu}v(t)$ in $H_1$. The compact embedding $H_1\hookrightarrow \hookrightarrow H$ gives strong convergence of $(\s L_{\nu} v_n(t))_n$ to $\s L_{\nu} v(t)$ in $H$. Thus, for all $t\in \R$
\[
   M_n\left(\frac{1}{it+\nu}\right) \s L_{\nu} v_n (t) \rightharpoonup M\left(\frac{1}{it+\nu}\right)\s L_{\nu} v(t) \in H.
\]
 Let $g\in H_{-1}(im+\nu)(\subseteqq L_{1,\textnormal{loc}}(\R))$ be bounded and with bounded support. Then by Lebesgue's dominated convergence theorem  
\begin{align*}
   \left\langle g,M_n\left(\frac{1}{im+\nu}\right) \s L_\nu v_n \right\rangle & = \int_\R g(t)^* \cdot M_n\left(\frac{1}{it+\nu}\right) \s L_{\nu} v_n (t)\dd t \\
     & = \int_\R \left(\frac{1}{-it+\nu}g(t)\right)^* M_n\left(\frac{1}{it+\nu}\right) (it+\nu)\s L_{\nu} v_n (t)\dd t \\
    &\rightharpoonup \int_\R \left(\frac{1}{-it+\nu}g(t)\right)^* M\left(\frac{1}{it+\nu}\right) (it+\nu)\s L_{\nu} v (t)\dd t \in H, \end{align*}
as $n\to\infty$ (Note that $\left(t\mapsto (it+\nu)\s L_{\nu} v_n (t)\right)_n$ is locally bounded by Lemma \ref{Le: Laplace_ptw}.) The density of bounded functions with bounded support in $H_{-1}(im+\nu)$ identifies the limit of $\left(M_n\left(\frac{1}{im+\nu}\right) \s L_{\nu} v_n\right)_n$ being equal to $M\left(\frac{1}{im+\nu}\right) \s L_{\nu} v$. 

 The last assertion of Theorem \ref{Th: weal-strong_1} follows again with a subsequence argument: Let $(n_k)_k$ be a subsequence of $(n)_n$. Then, since $\left(M_{n_k}(\partial_{0}^{-1})v_{n_k}\right)_k$ is bounded in $H_{\nu,1}(\R;H)$, there exists a weakly convergent subsequence, again denoted with the index sequence $(n_k)_k$. Denote its limit by $w\in H_{\nu,1}(\R;H)$. For $t\in \R$ let $\delta_t \in H_{\nu,-1}(\R)$ be the evaluation functional at $t\in\R$ ($\delta_t$ is indeed an element of $H_{\nu,-1}(\R)$ due to a Sobolev embedding result, see e.g.\ \cite[Lemma 3.1.59]{Picard} or \cite[Remark 2.1(ii)]{Waurick2012Asy}). Observing that $\delta_t$ maps weakly convergent sequences in $H_{\nu,1}(\R;H)$ to weakly convergent sequences in $H$, we arrive at
\[
  \delta_t\left(M_{n_k}(\partial_{0}^{-1})v_{n_k}\right) \rightharpoonup w(t) \in H
\]
and, by our results above, 
\[
   \delta_t\left(M_{n_k}(\partial_{0}^{-1})v_{n_k}\right) \rightharpoonup \left(M(\partial_{0}^{-1})v\right)(t) \in H.
\]
Hence, $w(t)= \left(M(\partial_{0}^{-1})v\right)(t)$ for all $t\in\R$. This shows the assertion.
\end{proof}


There is yet another useful lemma helping to conclude the proof of Theorem \ref{thm:ahr}:

\begin{lem}\label{le:fatou} Let $H$ be a Hilbert space, $(f_n)_n$ a weakly convergent sequence in $L_2(\R;H)$. Assume, in addition, that $f_n(t)\rightharpoonup f(t)$ in $H$ for a.e. $t\in\R$ and $n\to\infty$. Then for all $a\in\R$ and $n\in\N$, we have that
\[
    \int_{-\infty}^a \abs{f_n(t)-f(t)}^2 \dd t \leqq 4\sup_{n\in\N} \int_{-\infty}^a \abs{f_n(t)}^2\dd t. 
\] 
\end{lem}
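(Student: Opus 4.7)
The plan is to combine weak lower semicontinuity of the norm with the ordinary triangle inequality in $L_2((-\infty,a);H)$. The key observation is that pointwise weak convergence $f_n(t)\rightharpoonup f(t)$ for a.e.\ $t$ implies $\abs{f(t)}\leqq \liminf_{n\to\infty}\abs{f_n(t)}$ for a.e.\ $t\in\R$, which is the standard lower semicontinuity of the Hilbert norm with respect to the weak topology.

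First I would apply Fatou's lemma to the nonnegative measurable functions $t\mapsto \abs{f_n(t)}^2\chi_{(-\infty,a)}(t)$, which, combined with the pointwise estimate above, yields
\[
  \int_{-\infty}^a \abs{f(t)}^2\dd t \leqq \liminf_{n\to\infty}\int_{-\infty}^a \abs{f_n(t)}^2\dd t \leqq \sup_{n\in\N}\int_{-\infty}^a \abs{f_n(t)}^2\dd t.
\]
Note that the finiteness of the right-hand side is guaranteed because the weakly convergent sequence $(f_n)_n$ in $L_2(\R;H)$ is bounded, hence so is the sequence of its restrictions to $(-\infty,a)$.

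Next, I would invoke the triangle inequality in $L_2((-\infty,a);H)$: writing $\Abs{\cdot}_a$ for the norm on that space,
\[
  \Abs{f_n-f}_a \leqq \Abs{f_n}_a+\Abs{f}_a \leqq 2\sup_{k\in\N}\Abs{f_k}_a,
\]
where the second inequality uses the bound on $\Abs{f}_a$ just established. Squaring both sides gives the claim. The argument is routine; the only point that requires any care is ensuring the pointwise a.e.\ hypothesis is used correctly to justify the Fatou step. No further structure of the Hilbert space or of the weak convergence in $L_2(\R;H)$ is actually needed beyond the pointwise a.e.\ weak convergence and the uniform $L_2$-bound.
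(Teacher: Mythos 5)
Your proof is correct and follows essentially the same route as the paper: both arguments rest on the weak lower semicontinuity $\abs{f(t)}\leqq\liminf_{n\to\infty}\abs{f_n(t)}$ combined with Fatou's lemma, the only (cosmetic) difference being that you apply the triangle inequality at the level of the $L_2((-\infty,a);H)$-norm and square afterwards, whereas the paper uses the pointwise inequality $\abs{x-y}^2\leqq 2\abs{x}^2+2\abs{y}^2$ under the integral; both yield the constant $4$.
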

\begin{proof}
 Let $a\in\R$ and $n\in\N$. For almost every $t\in\R$ we have the (standard) estimate $\abs{f(t)}\leqq \liminf_{n\to\infty}\abs{f_n(t)}$. Hence, we get, employing Fatou's lemma, that
\begin{align*}
   \int_{-\infty}^a \abs{f_n(t)-f(t)}^2\dd t &\leqq 2\int_{-\infty}^a \abs{f_n(t)}^2+\abs{f(t)}^2\dd t \\
                                             &\leqq 2\int_{-\infty}^a \abs{f_n(t)}^2+\liminf_{n\to\infty}\abs{f_n(t)}^2\dd t\\
                                             &\leqq 2\left(\sup_{n\in\N}\int_{-\infty}^a \abs{f_n(t)}^2\dd t +\liminf_{n\to\infty}\int_{-\infty}^a \abs{f_n(t)}^2\dd t\right)\\
                                             &\leqq 4\sup_{n\in\N} \int_{-\infty}^a \abs{f_n(t)}^2\dd t. \qedhere
\end{align*}
\end{proof}

Now, we have all the tools at hand to prove Theorem \ref{thm:ahr}. The principle idea of the proof is the same as in \cite[Theorem 3.5]{Waurick2012Asy}. Therefore, we will not go too much into the details for the arguments already used in \cite{Waurick2012Asy}. However, there are several adjustments needed to conclude the proof of Theorem \ref{thm:ahr}. We give the main ideas as follows.

\begin{proof}[Proof of Theorem \ref{thm:ahr}] 
 To begin with, let $f\in H_{\nu,0}(\R;H)$ and such that $\inf \spt f >-\infty$. Now, let $u_n\in H_{\nu,0}(\R;H)$ solve
{ \begin{equation}\label{eq:proof_ahr}
    \left(\partial_0 M_n\left(\partial_{0}^{-1}\right)+A\right)u_n = f.
\end{equation}}
By causality, it follows that $\inf_{n\in\N}\inf \spt u_n > -\infty$. Moreover, { by Theorem \ref{thm:solth} it follows that $(u_n)_n$ is a bounded sequence in $H_{\nu,0}(\R;H)$. Furthermore, from \eqref{eq:proof_ahr} we read off
\[
  Au_n = \partial_0 M_n\left(\partial_0^{-1}\right) u_n + f \in H_{\nu,-1}(\R;H).
\]
Thus, estimating the right-hand side and using the boundedness of $(u_n)_n$ in $H_{\nu,0}(\R;H)$, we deduce that 
\begin{align*}
 \abs{Au_n}_{H_{\nu,-1}(\R;H)} & =\abs{\partial_0 M_n\left(\partial_0^{-1}\right) u_n + f}_{H_{\nu,-1}(\R;H)} \\
                               & \leqq \abs{M_n\left(\partial_0^{-1}\right) u_n }_{H_{\nu,0}(\R;H)}+\abs{f}_{H_{\nu,-1}(\R;H)} \\
                               & \leqq \sup_{n\in\N}\Abs{M_n(\partial_0^{-1})}\sup_{n\in\N} \abs{u_n}_{H_{\nu,0}(\R;H)}+\abs{f}_{H_{\nu,-1}(\R;H)}\\
                               &<\infty.
\end{align*}
Thus, $(u_n)_n$ is a bounded sequence in $H_{\nu,-1}(\R;H_1(A))$.
} There is a weakly convergent subsequence $(u_{n_k})_k$ { in $H_{\nu,0}(\R;H)\cap H_{\nu,-1}(\R;H_1(A))$} with limit $u$. { In consequence, $(\partial_0^{-2}u_{n_k})_k$ is a weakly convergent sequence in $H_{\nu,1}(\R;H_1(A))$ as $\partial_0^{-2}$ is a unitary transformation from $H_{\nu,-1}(\R;H_1(A))$ onto $H_{\nu,1}(\R;H_1(A))$. Now, $\inf_n \inf\spt \partial_0^{-2}u_n>-\infty$, by causality of $\partial_0^{-2}$. Thus, by Theorem \ref{Th: weal-strong_1} and Remark \ref{rem:seq_conv_itself}, we get that $M_{n_k}(\partial_0^{-1})\partial_0^{-2}u_{n_k} \stackrel{k\to\infty}{\rightharpoonup}M(\partial_0^{-1})\partial_0^{-2}u$ in $H_{\nu,1}(\R;H)$. Hence, using that $\partial_0^3$ commutes with functions of $\partial_0^{-1}$ and that $\partial_0^{3}$ can be considered as a unitary mapping from $H_{\nu,1}(\R;H)$ to $H_{\nu,-2}(\R;H)$, we get that $\text{w-}\lim_{k\to\infty}\partial_0 M_{n_k}\left(\partial_{0}^{-1}\right)u_{n_k}=\partial_0 M\left(\partial_0^{-1}\right)u$ in $H_{\nu,-2}(\R;H)$.}  Realizing that $(Au_{n_k})_k$ weakly converges to $Au$, we arrive at
\[
   \left(\partial_0 M\left(\partial_{0}^{-1}\right)+A\right)u = f.
\]
The well-posedness of the latter equation yields uniqueness of the limit of $(u_{n_k})_k$. Thus, $(u_n)_n$ weakly converges in $H_{\nu,0}(\R;H)$.
A density argument shows that $\left(\partial_0 M_n\left(\partial_{0}^{-1}\right)+A\right)^{-1}$ converges in the weak operator topology of $L(H_{\nu,0}(\R;H))$ to $\left(\partial_0 M\left(\partial_{0}^{-1}\right)+A\right)^{-1}$.

 Let $(f_n)_n$ be a weakly convergent sequence in $H_{\nu,0}\cap H_{\nu_1,0}(\R;H)$ for some $\nu>\nu_1>1/(2r)$ satisfying the integrability condition given in the theorem. Denote its limit by $f$. Now, let $u_n\in H_{\nu,0}(\R;H)$ solve
\[
   \left(\partial_0 M_n\left(\partial_{0}^{-1}\right)+A\right)u_n = f_n. 
\]
Following an idea given in \cite[proof of Lemma 3.2]{Schweizer}, we decompose $u_n = v_n + w_n$ in the way that $v_n$ and $w_n$ are the solutions of the respective equations
\[
   \left(\partial_0 M_n\left(\partial_{0}^{-1}\right)+A\right)v_n = f,
\]
and
\[
   \left(\partial_0 M_n\left(\partial_{0}^{-1}\right)+A\right)w_n = f_n-f,
\]
for all $n\in\N$.

We already know that $(v_n)_n$ weakly converges to the solution $u$ of the following problem
\[
   \left(\partial_0 M\left(\partial_{0}^{-1}\right)+A\right)u = f.
\]
We are left with showing that $(w_n)_n$ (weakly) converges to $0$. For this, observe that we have that $(\partial_0^{-2}w_n)_n$ is bounded in $H_{\nu,1}(\R;H_1(A))$. { Thus, there exists a weakly convergent subsequence of $(\partial_0^{-2}w_n)_n$ in $H_{\nu,1}(\R;H_1(A))$. We re-use the index sequence $(n)_n$ for this subsequence. Recall that, by the Sobolev embedding theorem (see e.g.~\cite[Lemma 3.1.59]{Picard} or \cite[Lemma 5.2]{Kalauch}), the operator $\delta_t$ of point evaluation at time $t\in \R$ is a continuous mapping from $H_{\nu,1}(\R;H_1(A))$ to $H_1(A)$. Hence, $\delta_t$ maps weakly convergent sequences to weakly convergent sequences. In particular, for every $t\in \R$ the sequence $(\delta_t\left(\partial_0^{-2}w_n\right))_n=(\partial_0^{-2}w_n(t))_n$ weakly converges in $H_1(A)$. By compactness of the embedding $H_1(A)\hookrightarrow H_0(A)=H$, weakly convergent sequences in $H_1(A)$ are mapped to strongly convergent sequences in $H_0(A)$. Hence, $(\partial_0^{-2}w_n(t))_n$ strongly converges in $H_0(A)$.} Moreover, let $C>0$ be an upper bound for $(\abs{\partial_0^{-2}w_n}_{\nu,0})_n$.  

{ For $n\in\N$, we compute with the help of the strict monotonicity of $ \left(\partial_0 M_n\left(\partial_{0}^{-1}\right)+A\right)$ (recall Remark \ref{rem:solth}(b)) for $a\in\R$:
\begin{align*}
   c\abs{\partial_0^{-2}w_n}& \leqq \abs{\langle (\partial_0 M_n(\partial_0^{-1})+A)\partial_0^{-2}w_n,\partial_0^{-2}w_n\rangle}\\
                             & = \abs{ \langle \partial_0^{-2}(f_n - f),\partial_0^{-2} w_n\rangle }\\
                             & \leqq  \int_{-\infty}^a \abs{\partial_0^{-2}(f_n-f)(t)}\abs{\partial_0^{-2}w_n(t)}e^{-2\nu t}\dd t \\ &\quad+ 
                                         \int_{a}^\infty \abs{\langle\partial_0^{-2}(f_n-f)(t),\partial_0^{-2}w_n(t)\rangle}e^{-2\nu t}\dd t.
\end{align*}
Further, we get with Lemma \ref{le:fatou} applied to the sequence $\left(e^{-\nu \cdot}\partial_0^{-2}f_n(\cdot)\right)_n$ that 
\begin{align*}
   & \int_{-\infty}^a \abs{\partial_0^{-2}(f_n-f)(t)}\abs{\partial_0^{-2}w_n(t)}e^{-2\nu t}\dd t \\ 
                             & \leqq \abs{\chi_{(-\infty,a]}(m_0)\partial_0^{-2}(f_n-f)}_{\nu,0}\abs{\chi_{(-\infty,a]}(m_0)\partial_0^{-2}w_n}_{\nu,0} \\
&\leqq C\abs{\chi_{(-\infty,a]}(m_0)\partial_0^{-2}(f_n-f)}_{\nu,0} \\
&\leqq 2C\sup_{n\in\N}\abs{\chi_{(-\infty,a]}(m_0)\partial_0^{-2}\chi_{(-\infty,a]}(m_0)f_n}_{\nu,0} \\
&\leqq \frac{2C}{\nu^2}\sup_{n\in\N}\abs{\chi_{(-\infty,a]}(m_0)f_n}_{\nu,0}
\end{align*}
where we also used causality of $\partial_0^{-2}$. The latter computations together with
\begin{multline*}\int_{a}^\infty \abs{\langle \partial_0^{-2}(f_n-f)(t),\partial_0^{-2}w_n(t)\rangle}e^{-2\nu t}\dd t\\ =\int_{a}^\infty \abs{\langle\partial_0^{-2}(f_n-f)(t)e^{-\nu_1 t},\partial_0^{-2}w_n(t)e^{-\nu_1 t}\rangle}e^{-2(\nu-\nu_1) t}\dd t.\end{multline*}
yield
}\begin{multline} \label{est:last}
   c\abs{\partial_0^{-2}w_n}\leqq \frac{2C}{\nu^2}\sup_{n\in\N}\abs{\chi_{(-\infty,a]}(m_0)f_n}_{\nu,0} \\
 + \int_{a}^\infty \abs{\langle\partial_0^{-2}(f_n-f)(t)e^{-\nu_1 t},\partial_0^{-2}w_n(t)e^{-\nu_1 t}\rangle}e^{-2(\nu-\nu_1) t}\dd t.
\end{multline}
By \cite[Lemma 3.1.59]{Picard} or \cite[Lemma 5.2]{Kalauch}, i.e., the continuity of the embedding \[
H_{\nu_1,1}(\R;H)\hookrightarrow C_{\nu_1}(\R;H)\coloneqq \{f\in C(\R;H); \sup_{t\in\R}\abs{f(t)e^{-\nu_1 t}}_H<\infty\},
                                          \]
the expression $(t\mapsto \abs{\partial_0^{-2}(f_n-f)(t)}e^{-\nu_1 t}\abs{\partial_0^{-2}w_n(t)e^{-\nu_1 t}})_n$ is uniformly bounded in both $n$ and $t$. Hence, by Lebesgue's dominated convergence theorem, the last term on the right-hand side in \eqref{est:last} tends to $0$ for every given $a\in\R$. Thus, $(w_n)_n$ converges to $0$ since the first term on the right-hand side in \eqref{est:last} tends to zero if $a\to-\infty$. 
\end{proof}

\section*{Acknowledgements}
The author wants to thank Sascha Trostorff and J\"urgen Voigt for useful discussions particularly concerning Theorem \ref{Th: weal-strong_1}. Moreover, the author thanks Daniel Karrasch for drawing the author's attention to reference \cite{Schweizer}.

\end{document}